\newcommand\nc\newcommand
\renewcommand
\nc\Sub{\operatorname{\sf Sub}}
\nc\NSub{\operatorname{\sf NSub}}
\nc\Ht{\operatorname{\sf Ht}}
\nc\bH{{\bf H}}
\nc\G{\mathcal G}
\nc\rG{\mathrel{\G}}
\nc\leqG{\operatorname{\leq_\G}}
\nc{\Mod}[1]{\ (\mathrm{mod}\ #1)}
\DeclareMathOperator{\kerr}{\overline{\text{ker}}}
\nc\trans[1]{\left(\begin{smallmatrix}#1\end{smallmatrix}\right)}
\newcommand{\Cong}{\operatorname{\sf Cong}}
\nc\CongS{\Cong^S\hspace{-0.6truemm}}
\nc\AND{\qquad\text{and}\qquad}
\nc\ANDSIM{\qquad\text{and similarly}\qquad}
\nc\ANd{\quad\text{and}\quad}
\nc\COMMA{,\qquad}
\nc\COMMa{,\quad}
\nc\WHERE{\qquad\text{where}\qquad}
\rnc\iff{\ \Leftrightarrow\ }
\nc\IFf{\quad \Leftrightarrow\quad }
\nc\Iff{\ \ \Leftrightarrow\ \ }
\nc\IFF{\qquad \Leftrightarrow\qquad }
\rnc\implies{\ \Rightarrow\ }
\nc\IMPLIES{\qquad \Rightarrow\qquad }
\nc\set[2]{\{#1:#2\}}
\nc\bigset[2]{\big\{#1:#2\big\}}
\nc\pres[2]{\la#1:#2\ra}
\nc\bit{\begin{itemize}[label=\textbullet, leftmargin=5mm]}
\nc\eit{\end{itemize}}
\nc\ben{\begin{thmenumerate}}
\nc\bena{\begin{enumerate}[label=\textup{(\alph*)},leftmargin=10mm]}
\nc\een{\end{thmenumerate}}
\nc\eena{\end{enumerate}}
\nc\pf{\begin{proof}}
\nc\epf{\end{proof}}
\nc\pfclaim{\begin{quote}\begin{proof}}
\nc\epfclaim{\end{proof}\end{quote}}
\nc\epfres{\hfill\qed}
\nc\epfreseq{\tag*{\qed}}
\let\oldproofname=\proofname
\renewcommand{\proofname}{\rm\bf{\oldproofname}}
\nc{\pfitem}[1]{\medskip \noindent #1.}
\nc{\firstpfitem}[1]{#1.}
\nc{\pfcase}[1]{\medskip\noindent {\bf Case #1.}}
\renewcommand{\H}{\mathcal H}
\renewcommand{\L}{\mathcal L}
\newcommand{\R}{\mathcal R}
\newcommand{\D}{\mathcal D}
\newcommand{\J}{\mathcal J}
\newcommand{\K}{\mathcal K}
\nc\rH{\mathrel{\H}}
\nc\rL{\mathrel{\L}}
\nc\rR{\mathrel{\R}}
\nc\rD{\mathrel{\D}}
\nc\rJ{\mathrel{\J}}
\nc\rK{\mathrel{\K}}
\nc\rsi{\mathrel{\si}}
\nc\leqL{\leq_\L}
\nc\leqR{\leq_\R}
\nc\leqJ{\leq_\J}
\nc\leqH{\leq_\H}
\renewcommand{\P}{\mathcal P} 
\rnc\S{\mathcal S}
\nc\A{\mathcal A}
\nc\B{\mathcal B}
\nc\M{\mathcal M}
\nc\TL{\mathcal T\!\mathcal L}
\nc\F{\mathcal F}
\nc\I{\mathcal I}
\nc\C{\mathcal C}
\nc\POI{\mathcal{POI}}
\nc\PO{\mathcal{PO}}
\rnc\O{\mathcal O}
\nc\LL{\mathcal L}
\newcommand{\T}{\mathcal T}
\newcommand{\PT}{\mathcal P\mathcal T} 
\renewcommand{\I}{\mathcal I}  
\nc\bs[1]{\boldsymbol{#1}}
\newcommand{\multwk}[1]{\cdot}
\nc\Proj[1]{\overline{#1}}
\nc\norm[1]{\langle\!\langle#1\rangle\!\rangle}
\newcommand{\N}{\mathbb{N}}
\nc\bn{{\bf n}}
\nc\bm{{\bf m}}
\nc\bnz{{\bf n}_0}
\nc\bmz{{\bf m}_0}
\nc\bdz{{\bf d}_0}
\nc\bq{{\bf q}}
\nc\zero{{\bf 0}}
\newcommand{\coker}{\operatorname{coker}}
\newcommand{\dom}{\operatorname{dom}} 
\newcommand{\codom}{\operatorname{codom}}
\newcommand{\im}{\operatorname{im}}
\let\ker\undefined
\newcommand{\ker}{\operatorname{ker}}
\nc\thx{\theta^{\textsf{x}}}
\DeclareRobustCommand
\rnc\emptyset{\varnothing}
\nc\pc[2]{(#1,#2)^\sharp}
\nc{\uv}[1]{\fill (#1,2)circle(.17);}
\nc{\lv}[1]{\fill (#1,0)circle(.17);}
\nc{\uvs}[1]{{\foreach \x in {#1} { \uv{\x}}}}
\nc{\lvs}[1]{{\foreach \x in {#1} { \lv{\x}}}}
\nc{\darcx}[3]{\draw(#1,0)arc(180:90:#3) (#1+#3,#3)--(#2-#3,#3) (#2-#3,#3) arc(90:0:#3);}
\nc{\darc}[2]{\darcx{#1}{#2}{.4}}
\nc{\uarcx}[3]{\draw(#1,2)arc(180:270:#3) (#1+#3,2-#3)--(#2-#3,2-#3) (#2-#3,2-#3) arc(270:360:#3);}
\nc{\uarc}[2]{\uarcx{#1}{#2}{.4}}
\nc{\stline}[2]{\draw(#1,2)--(#2,0);}
\newtheorem{thm}{Theorem}[section]
\newtheorem*{thm*}{Theorem}
\newtheorem*{theorem*}{Main theorem}
\newtheorem{lemma}[thm]{Lemma}
\newtheorem{cor}[thm]{Corollary}
\newtheorem{prop}[thm]{Proposition}
\theoremstyle{definition}
\newtheorem{rem}[thm]{Remark}
\newenvironment{thmenumerate}{
   \begin{enumerate}[label=\textup{(\roman*)}, widest=(5), leftmargin=10mm]}{
    \end{enumerate}}
\newcounter{caseco}
\newcounter{subcaseco}
\newcounter{stepco}
\newcounter{stageco}
\newcounter{RT}\renewcommand{\theRT}{RT\arabic{RT}}
\newcounter{fR}\renewcommand{\thefR}{fR\arabic{fR}}
\definecolor{delcol}{HTML}{F4F3F4}
\definecolor{excepcol}{HTML}{D50B53}
\definecolor{mucol}{HTML}{B9C406}
\definecolor{Ncol}{HTML}{FAA565}
\definecolor{Rcol}{HTML}{A882C1}
\newcommand{\NZ}{\mathbb{N}_0}
\newcommand{\Z}{\mathbb{Z}}
\newcommand{\Lg}{\mathcal{L}}
\newenvironment{nitemize}{\begin{itemize}[label=\textbullet, leftmargin=5mm]}{\end{itemize}}
\begin{document}

\title[Coherency for monoids]{Coherency properties for monoids of transformations and partitions}
\date{\today}
\author{Matthew Brookes}
\address{Mathematical Institute, School of Mathematics and Statistics, University of St Andrews, St Andrews, Fife KY16 9SS, UK.}
\email{mdgkb1@st-andrews.ac.uk}

\author{Victoria Gould}
\address{Department of Mathematics, University of York, York YO10 5DD, UK.}
\email{victoria.gould@york.ac.uk}

\author{Nik Ru\v{s}kuc}
\address{Mathematical Institute, School of Mathematics and Statistics, University of St Andrews, St Andrews, Fife KY16 9SS, UK.}
\email{nik.ruskuc@st-andrews.ac.uk}

\thanks{Supported by the Engineering and Physical Sciences Research Council [EP/V002953/1, EP/V003224/1]}

\keywords{Monoid, Coherency, Right congruence, Transformation monoid, Partition monoid}

\subjclass[2020]{20M30, 20M05, 20M18, 20M20}

\maketitle


\begin{abstract}
\noindent
A monoid $S$ is {\em right coherent} if every finitely generated
subact of every finitely presented right $S$-act itself has a finite presentation; it is \emph{weakly right coherent} if every 
finitely generated right ideal of $S$ has a finite presentation.
We show that full and partial transformation monoids, symmetric inverse monoids and partition monoids over an infinite set are all weakly right coherent,
but that none of them is right coherent.
Left coherency and weak left coherency are defined dually, and the corresponding results hold  for these properties.
In order to prove the non-coherency results, we give a presentation of an inverse semigroup which does not embed into any left or right coherent monoid.
\end{abstract}


\section{Introduction}
\label{sec:intro}

A monoid $S$ is {\em right coherent} if every finitely generated
subact of every finitely presented right $S$-act itself has a finite presentation. The concept has several natural sources, including the corresponding notion for rings \cite{chase:1960},
the model theoretic notion of model companion \cite{wheeler:1976} and questions of axiomatisability of classes of existentially closed or algebraically closed $S$-acts \cite{gould:1986}. It is also intimately connected with notions of injectivity and purity for $S$-acts \cite{gould:2023}.

A monoid $S$ is {\em weakly right coherent} if every finitely generated
right ideal of $S$ has a finite presentation. For rings the corresponding notions of right coherency and weak right coherency coincide, essentially because every cyclic right module is determined by a right ideal. This is not the situation for monoids. Not every cyclic right $S$-act is determined by a right ideal: one must consider right congruences per se. 

We remark that other notions of coherency exist in the literature. For example, a group is coherent if every finitely generated subgroup has a finite presentation \cite{wise:2019}.  More broadly, one can say that a (universal) algebra is coherent if every finitely generated subalgebra has a finite presentation. Thus, a monoid $S$ is right coherent in our sense if every finitely presented right $S$-act is coherent in this latter sense.  

Left coherency and weak left coherency are defined dually to the right-handed concepts.
Left and right coherency, and hence left and right weak  coherency, are finitary conditions for monoids in the sense that any finite monoid satisfies them. These notions were introduced in 
\cite{gould:1986} and developed in a number of articles.   Groups, right noetherian monoids, free monoids and regular monoids in which all right ideals are finitely generated are right  coherent \cite{gould:1992,gould:2017,dandan:2020}. On the other hand, free inverse monoids  of rank $\geq 2$ are weakly left and right coherent but neither left nor right coherent \cite{gould:2017a}.  The pattern of these results does not guide us in drawing immediate conjectures with respect to some very natural infinite monoids of transformations and partitions.  
The aim of this paper is to remedy this situation. In particular, we resolve the question of (weak) left and right coherency for a number of monoids of major interest. The following is an aggregate of Theorems~\ref{thm:Ix Tx PTx WRC}, \ref{thm:Ix Tx PTx WLC} and \ref{thm: Px WRC} (for weak coherency), and Theorem~\ref{thm:coh} (for coherency).

\begin{theorem*}  Let $X$ be an infinite set. The  full transformation monoid
$\mathcal{T}_X$,  the partial transformation monoid $\mathcal{PT}_X$, the symmetric inverse monoid $\mathcal{I}_X$ and the partition monoid $\mathcal{P}_X$ on  $X$, are all weakly left and right coherent but neither left nor right coherent.
\end{theorem*} 

In order to prove the non-coherency results, we develop a new piece of methodology. Namely, we give  a presentation of a monoid $P(g,h,e)$ 
such that if $P(g,h,e)$ occurs as a  subsemigroup of any monoid $M$, it prevents $M$ from being right or left coherent.

We summarise the concepts required for this paper in Section~\ref{sec:prelim}. In 
Section \ref{sec:weak for partition} we prove the positive results for weak left and right coherency. In 
Section \ref{sec:annihilators} we introduce the monoid $P(g,h,e)$ and show that it cannot be a subsemigroup of any left or right coherent monoid. 
We observe that $P(g,h,e)$ is isomorphic to a submonoid of $\I_\Z$. Having done the brunt of the work, in Section \ref{sec:examples}
we are rapidly able to prove our non-coherency results.

\section{Preliminaries}
\label{sec:prelim}

\subsection{Monoids}
Throughout this section, unless otherwise stated, $S$ will stand for a monoid, and its identity will be denoted by $1_S$.
We use $E(S)$ to denote the set of idempotents of $S$. A subset $T\subseteq S$ is a \emph{subsemigroup} of $S$ if it is closed under the multiplication, and a \emph{submonoid} if additionally $1_S\in T$.
A \emph{right ideal} of $S$ is a subset $I\subseteq S$ such that $IS:=\{xs\mid x\in I,s\in S\} \subseteq I$.

\subsection{Right congruences}
For a relation $\rho\subseteq X\times X$ on a set $X$, we will use both $(x,y)\in\rho$ and $x\,\rho\, y$ to mean that $x$ and $y$ are $\rho$-related. We also write $\rho^{-1}$ for the relation $\{ (x,y)\mid (y,x)\in\rho\}$.

A \emph{right congruence} on a monoid $S$ is an equivalence relation $\rho\subseteq S\times S$ which is \emph{right compatible}, i.e. satisfies $a\,\rho\, b \Rightarrow ac\,\rho\, bc$ for all $a,b,c\in S$. We denote by $\Delta$ the equality relation on $S$, which is the smallest right congruence on $S$.
For a set $Y\subseteq S\times S$ we denote by $\langle Y\rangle$ the right congruence on $S$ \emph{generated} by $Y$,
which is the smallest right congruence that contains $Y$.
The following criterion for a pair to belong to $\langle Y\rangle$ will be used throughout the paper: for $a,b\in S$ we have
$(a,b)\in\langle Y\rangle$ if and only if there exists a $Y$-sequence from $a$ to $b$, i.e. a sequence of the form
\[
a=c_1t_1,\, d_1t_1=c_2t_2,\cdots, d_mt_m=b,
\]
where $m\geq 0$, $(c_i,d_i)\in Y\cup Y^{-1}$, $t_i\in S$, $i=1,\dots,m$.
The number $m$ is called the \emph{length} of the sequence. The case $m=0$ is interpreted as $a=b$.
For details see \cite[Lemma 1.4.37]{kilp00}. 
A right congruence $\rho$ is {\em finitely generated} if there is a finite set $Y$ such that $\rho=\langle Y\rangle$. Equivalently, $\rho$ is  finitely generated precisely when there is no infinite strictly ascending chain of right congruences $\rho_1\subset \rho_2\subset\, \dots$ such that $\rho=\bigcup_{n\in \N} \rho_n$.

\subsection{Right $S$-acts}
A {\em right $S$-act} is a set $A$ together with a function $A\times S\rightarrow S$, $(a,s)\mapsto as$, such that $(as)t=a(st)$ and $a1_S=a$ for all $a\in A$ and all $s,t\in S$. 
For a set $X\subseteq A$ we write $XS:=\{xs\mid x\in X,s\in S\}$.
A subset $B\subseteq A$ is an \emph{subact} of $A$ if $BS\subseteq B$.
The $S$-act $A$ is said to be \emph{generated} by a set $X\subseteq A$ if $XS=A$,
{\em finitely generated} if it is generated by a finite set, and  {\em monogenic} if it is generated by a single element.
For a more detailed introduction to actions see \cite[Section 1.4]{kilp00}.

The right ideals of $S$ are  right $S$-acts via right multiplication by the elements from $S$.
They are precisely the subacts of the (monogenic) right $S$-act $S$.
If $\rho$ is a right congruence on $S$ then the \emph{quotient act} $S/\rho$ is the set of equivalence classes together with the action $(a\rho)s=(as)\rho$. It is well known that there is a natural bijective identification between monogenic $S$-acts and right congruences; see \cite[Theorem 11.6]{clifford67}.

\subsection{Right coherence}
We saw in the Introduction how the notions of right coherence and weak right coherence both naturally arise from the notion of right coherence for rings. In addition to the definitions given in the Introduction, in terms of finite generation and presentability of $S$-acts, both these notions have a number of additional equivalent formulations. Here we present those that are most convenient for our purposes.
For this we need the notion of the {\em right annihilator} congruence of an element $a\in S$ with respect to a right congruence $\rho$:
    $$r(a\rho):=\{(u,v)\in S\times S \mid au\ \rho\ av\}.$$
It is straightforward to check that $r(a\rho)$ is indeed a right congruence.

\begin{thm*}[{\cite[Corollaries 3.3, 3.4]{gould:1992}}] Let $S$ be a monoid.
\begin{enumerate}[label=\textup{(\arabic{section}.\arabic{subsection}.\arabic*)},leftmargin=13mm]
\item
\label{it:cc1}
$S$ is right coherent if and only if  for all $a,b\in S$ and for all finitely generated right congruences $\rho$ the subact $(a\rho)S\cap (b\rho)S$ of the right $S$-act $S/\rho$ is finitely generated and the right annihilator $r(a\rho)$ is finitely generated. 
\item
\label{it:cc2}
$S$ is weakly right coherent if and only if for all $a,b\in S$ the right ideal $aS\cap bS$ of $S$ is finitely generated (as a subact) and the right annihilator $r(a\Delta)$ is finitely generated.
\end{enumerate}
\end{thm*}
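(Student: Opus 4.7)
The plan is to prove (1) by translating the definition of right coherence through the bijection between monogenic right $S$-acts and right congruences, together with a union/intersection lemma, and to obtain (2) as the specialisation to $\rho=\Delta$, in which case $S/\Delta=S$ as a right $S$-act and its subacts are exactly the right ideals. The basic correspondence needed is that every monogenic right $S$-act is isomorphic to $S/\sigma$ for a unique right congruence $\sigma$, and is finitely presented iff $\sigma$ is finitely generated. Applied to the cyclic subact $(a\rho)S$ of $S/\rho$, the map $s\mapsto(as)\rho$ is a surjection $S\to(a\rho)S$ whose kernel as a right congruence on $S$ is precisely $r(a\rho)$; hence $(a\rho)S\cong S/r(a\rho)$ and so $(a\rho)S$ is finitely presented iff $r(a\rho)$ is finitely generated.

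The main technical step, which I expect to be the principal obstacle, is a \emph{union/intersection lemma}: for finitely presented subacts $C_1,C_2$ of a right $S$-act, $C_1\cup C_2$ is finitely presented iff $C_1\cap C_2$ is finitely generated as a subact. One direction builds a presentation of $C_1\cup C_2$ from presentations of $C_1$ and $C_2$ by adjoining, for each generator $x_j$ of $C_1\cap C_2$, a single relation identifying a preimage of $x_j$ in $C_1$ with one in $C_2$. The converse is a Tietze-style argument: from a finite presentation of $C_1\cup C_2$ one extracts a finite set of ``transfer'' elements of $C_1\cap C_2$ sufficient to generate the intersection. Iterating this lemma allows an induction on the number of generators, reducing the statement ``every finitely generated subact of $S/\rho$ is finitely presented'' to the 1- and 2-generator cases.

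Assembling these pieces, the forward direction of (1) is immediate: assuming right coherence and $\rho$ finitely generated, both $(a\rho)S$ and $(a\rho)S\cup (b\rho)S$ are finitely generated subacts of the finitely presented act $S/\rho$, hence finitely presented, which by the first paragraph yields finite generation of $r(a\rho)$, and by the union/intersection lemma yields finite generation of $(a\rho)S\cap (b\rho)S$. For the converse, a final reduction from general finitely presented targets $A$ to monogenic ones is needed: if $A=F_n/\tau$ with $\tau$ finitely generated, then $A$ decomposes as a finite union of monogenic subacts, each of the form $S/\rho_i$ with $\rho_i$ inheriting finite generation from $\tau$; then a finitely generated subact $B\subseteq A$ is a finite union of cyclic subacts of these components, and repeated application of the union/intersection lemma, combined with the hypotheses applied inside each $S/\rho_i$, shows $B$ finitely presented. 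This gives exactly \ref{it:cc1}. Finally, (2) is the special case $\rho=\Delta$: since $S=S/\Delta$ is already monogenic no reduction from multigenerator targets is required, the finitely generated subacts of $S$ are precisely the finitely generated right ideals, and the same monogenic/union argument yields \ref{it:cc2}.
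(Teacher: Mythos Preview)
The paper does not prove this theorem; it is quoted from \cite{gould:1992} (Corollaries~3.3 and~3.4) and used as a black box.  There is therefore no ``paper's own proof'' to compare against, and what follows is an assessment of your argument on its own merits.

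Your overall strategy is the natural one, and it works cleanly for \ref{it:cc2} and for the forward direction of \ref{it:cc1}.  The identification $(a\rho)S\cong S/r(a\rho)$ is correct, and the union/intersection lemma (for finitely presented subacts $C_1,C_2$, the union $C_1\cup C_2$ is finitely presented iff $C_1\cap C_2$ is finitely generated) is true and is indeed the engine; both directions of it are needed and both follow from the standard fact that a surjection between finitely presented acts has finitely generated kernel congruence.

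The converse of \ref{it:cc1}, however, has a genuine gap.  You write that if $A=F_n/\tau$ with $\tau$ finitely generated then $A=\bigcup_i a_iS$ with $a_iS\cong S/\rho_i$ and ``$\rho_i$ inheriting finite generation from $\tau$''.  That inheritance is not automatic for $n\geq 2$: the annihilator $\rho_i$ of a single generator can fail to be finitely generated even when $\tau$ is --- this failure is precisely the obstruction to right coherence.  (Concretely, with two free generators and relations of the form $(x_1a_j,x_2b_j)$, membership in $\rho_1$ is governed by chains of equalities $b_{j_1}t_1=b_{j_2}t_2=\cdots$ in $S$, and there is no reason for the resulting right congruence to be finitely generated.)  So you cannot assume at the outset that the cyclic pieces $a_iS$ are finitely presented; that has to be \emph{derived} from the hypotheses.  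A second, related gap: when you pass to a finitely generated subact $B\subseteq A$, its cyclic pieces $b_jS$ may lie in different $a_iS$, and the hypotheses, which concern a single $\rho$, do not directly control an intersection $b_jS\cap b_kS$ straddling two components.

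The standard remedy is a genuine induction on $n$ in which one first establishes, from the hypotheses, that every cyclic subact of a finitely presented $A$ is finitely presented and that pairwise intersections of such are finitely generated --- rather than assuming this.  This inductive step is the substantive technical content of the theorem; your sketch has correctly identified all the surrounding ingredients but elides this one.
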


\subsection{Green's relations} For a monoid $S$, 
 \emph{Green's preorder} $\leq_\R$  is defined by  $a\leq_\R b$ if there is an $s\in S$ such that $bs=a.$
{\em Green's equivalence} $\R$ is the equivalence relation  induced by the preorder $\leq_\R$,
i.e.  $a\ \R\ b$ if there are $s,t\in S$ such that $as=b$ and $bt=a$.
 For details see \cite[Section 2.1]{howie95}. 
 
 \subsection{Left-right duality}
 All the notions that have been prefixed by the word `right' have obvious left-right duals.
 Thus we have left congruences, left $S$-acts, weak left coherence, left coherence, left annihilators, Green's preorder $\leq_\Lg$ and Green's equivalence~$\Lg$.

\subsection{Idempotents} \label{ss:ids}
Green's preorder $\leq_\R$ is related to the natural partial order on the set of idempotents $E(S)$, which is defined by $e\leq f$ if and only if $fe=ef=e$. 
Clearly $e\leq f$ implies $e\leq_\R f$. Since $\leq$ is defined purely in terms of the two idempotents involved, for a subsemigroup $T\leq S$ we have $e\leq f$ in $T$ if and only if $e\leq f$ in $S$.  
The analogue is not true in general for the $\leq_\R$ preorder.

\subsection{Regular and inverse monoids} \label{ss:reg}
An element $a\in S$ is said to be {\em regular} if there is $y\in S$ such that $aya=a$ and $S$ is called {\em regular} if every $a\in S$ is regular. 
In this case $ay$ and $ya$ are idempotents and we have
$ay\,\R\, a\, \Lg\, ya$.
In fact, $S$ is regular precisely when every $\R$-class contains an idempotent.
Furthermore, a regular $a\in S$ has a \emph{generalised inverse} $x$, which satisfies $axa=a$ and $xax=x$.
If every element has a \emph{unique} generalised inverse, $S$ is said to be an \emph{inverse monoid}; in this case we denote the unique generalised inverse of $a$ by $a^{-1}$.
Equivalently, inverse monoids are precisely the regular monoids in which the idempotents form a commutative subsemigroup. For details see \cite[Sections 2.3, 2.4, 5.1]{howie95}.
The link between the partial order $\leq$ on idempotents, and the Green's preorder $\leq_\R$ becomes tight for inverse semigroups, in the sense that $a\leq_\R b\Leftrightarrow aa^{-1}\leq bb^{-1}$; see \cite[Proposition 2.4.1]{howie95} and its proof.
If $T$ is a regular subsemigroup of a monoid $S$ then the Green's preorders $\leq_\R$ and $\leq_\Lg$ on $T$ are
equal to the restrictions to $T$ of the preorders $\leq_\R$ and $\leq_\Lg$ on $S$; see \cite[Proposition 2.4.2]{howie95}.

\subsection{Transformation monoids}\label{ss:tran}
Let $X$ be a set. 
The {\em partial transformation monoid} $\PT_X$ is the set of partial mappings from $X$ to itself under composition of
 mappings.
For $\al\in\PT_X$, we define its \emph{domain}, \emph{image} and \emph{kernels} as follows:
\begin{align*}
\dom \al & := \{ x\in X\mid x\al \text{ is defined}\}, &
\im \al & :=\{ x\al\mid x\in X\},\\
\ker \al & :=\{ (x,y)\in \dom \al\times\dom \al\mid x\al=y\al\}, & \kerr \alpha & := \ker\alpha \cup \{ (x,y)\::\: x,y\in X\setminus \dom\alpha\}.
\end{align*}

The \emph{full transformation monoid} $\T_X$ and the \emph{symmetric inverse monoid} $\I_X$ on $X$ consist respectively of full and injective mappings from $\PT_X$:
     \[
     \T_X=\{\al\in\PT_X\mid \dom \al=X\},\quad
        \I_X=\{\al\in \PT_X\mid  \ker \alpha=\Delta_{\dom \al}\}.
     \]
It is elementary that each of $\T_X,\PT_X$ and $\I_X$ is regular, and indeed $\I_X$ is inverse. 
Therefore to describe Green's relations on all three it suffices to state them for $\PT_X$ and then appeal to \ref{ss:reg}.
Here is such a description:
    \[
        \al\leq_\Lg \be  \iff  \im \al \subseteq \im \be,\quad
        \al\leq_\R \be  \iff  \dom \al \subseteq \dom \be \text{ and } \kerr \be\subseteq \kerr \al.
   \]
For more details on transformation semigroups we refer the reader to \cite{gan09}, though do note that while stated for finite $X$ this assumption is not used. Also, the reader should note that in \cite{gan09} mappings are written to the left of arguments, and the relation $\kerr\alpha$ is denoted by $\pi_\alpha$.

\subsection{Partition monoids}\label{ss:part}
Let $X$ be a set, and let $X^\prime=\{x'\mid x\in X\}$ be a disjoint copy of $X$. The {\em partition monoid} on $X$, denoted $\P_X$,  consists of all set partitions of $X\cup X^\prime$ under the composition defined as follows. Each partition in $\P_X$ is identified with any graph on $X\cup X^\prime$ whose connected components are the blocks of the partition. These graphs are drawn with the vertices in two rows, the vertices from $X$ in the top row and those from $X^\prime$ in the bottom row.  Given $\al,\be\in \P_X$ let $\al_\downarrow$  and $\be^\uparrow$ be the graph obtained by relabelling every lower vertex $x'$ in $\al$ and every upper vertex $x$ in $\be$ by $x^{\prime\prime}$. Then let $\Pi(\al,\be)$ be the graph on $\{x,x',x''\mid x\in X\}$ with edge set the union of the edge sets of $\al_\downarrow$ and $\be^\uparrow$. The product $\al\be\in \P_X$ is then the partition such that the blocks are the subsets of $X\cup X^\prime$ which are in the same connected component of $\Pi(\al,\be).$  The mapping $\alpha\mapsto\alpha^\ast$, where $\alpha^\ast$ is obtained by swapping the top and bottom rows of $\alpha$, is an anti-isomorphism. Furthermore, $\alpha^\ast$ is a generalised inverse of $\alpha$, and hence $\P_X$ is regular.
The product in $\P_X$ and the anti-isomorphism $\alpha\mapsto\alpha^\ast$ may be nicely visualised in terms of graphs $\Pi(\alpha,\beta)$, which may help a reader with subsequent arguments, for example see \cite{east11}.

Let $\al\in \P_X$. A block of $\al$ is said to be \emph{upper} if it is contained in $X$; \emph{lower} if it is contained in $X'$; or \emph{transversal} if it contains elements from both $X$ and $X'$.
Green's preorders on $\P_X$ can be described in terms of the following parameters:
\begin{nitemize}
\item
$\dom \al$ (resp. $\codom \al$) consisting of all $x\in X$ (resp. $x'\in X')$ that belong to a transversal of $\al$;
\item
$\ker \al$ (resp. $\coker \al$), the partition on $X$ (resp. $X'$) induced by $\al$;
\item
$N_U(\al)$ (resp. $N_L(\al)$) the set of all upper (resp. lower) blocks of $\al$.
\end{nitemize}
With this notation, for $\al,\be\in\P_X$ we have (\cite[Theorem 4.4]{dolinka21}):
\begin{align*}
&\al\leq_\R \be \Leftrightarrow \ker \be\subseteq \ker \al \text{ and } N_U(\be)\subseteq N_U(\al),\\
&\al\leq_\Lg \be \Leftrightarrow \coker \be\subseteq \coker \al \text{ and } N_L(\be)\subseteq N_L(\al).
\end{align*}


\section{Weak coherency for transformation and partition monoids}
\label{sec:weak for partition}
 
In this section we prove that each of the monoids $\I_X$, $\T_X$, $\PT_X$ and $\P_X$ is weakly right and left coherent.
We begin with some general observations, which reduce the work needed to check the conditions from
\ref{it:cc2}.

\begin{lemma}\label{lem:r cong gen by (1,g)}
    Let  $S$ be  a monoid and let $s\in S$. The right congruence $\langle (1,s)\rangle$ is given by
    $$\kappa=\{(u,v)\in S\times S\mid s^mu=s^nv \text{ for some } m,n\in \NZ\}. $$ 
\end{lemma}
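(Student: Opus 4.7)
The plan is to establish equality by the standard double inclusion, exploiting the universal property of $\langle(1,s)\rangle$ as the smallest right congruence containing the pair $(1,s)$.

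First I would verify that $\kappa$ is itself a right congruence on $S$ that contains $(1,s)$, giving the inclusion $\langle(1,s)\rangle\subseteq\kappa$ for free. Reflexivity is witnessed by $m=n=0$, symmetry is immediate from the symmetry of the defining condition in $u$ and $v$, and right compatibility follows by multiplying $s^m u=s^n v$ on the right by any $t\in S$. Transitivity is the only step that needs a moment: if $s^m u=s^n v$ and $s^p v=s^q w$, then multiplying the first equation on the left by $s^p$ and the second by $s^n$ gives $s^{p+m}u=s^{p+n}v=s^{n+p}v=s^{n+q}w$, so $(u,w)\in\kappa$. The pair $(1,s)$ lies in $\kappa$ because $s^1\cdot 1=s^0\cdot s$.

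For the reverse inclusion $\kappa\subseteq\langle(1,s)\rangle$, the key observation is that applying right compatibility to $(1,s)\in\langle(1,s)\rangle$ yields $(t,st)\in\langle(1,s)\rangle$ for every $t\in S$, and then iterating (using transitivity) yields $(t,s^k t)\in\langle(1,s)\rangle$ for every $t\in S$ and every $k\in\NZ$. Now given $(u,v)\in\kappa$ with witnessing exponents $s^m u=s^n v$, I would chain
\[
u\mathrel{\langle(1,s)\rangle} s^m u=s^n v\mathrel{\langle(1,s)\rangle} v,
\]
which places $(u,v)$ in $\langle(1,s)\rangle$. Concretely this amounts to writing down a $\{(1,s)\}$-sequence of length $m+n$ from $u$ to $v$ of the shape $u=1\cdot u,\ s\cdot u=1\cdot (su),\ \dots,\ s^m u=s^n v,\ \dots,\ 1\cdot v=v$.

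I do not anticipate a substantive obstacle: the only subtle point is ensuring transitivity of $\kappa$ is handled by the correct left-multiplications of the defining equations, and that the chain argument in the reverse direction is rephrased as a genuine $Y$-sequence in the sense defined before the lemma. Both are routine once the observation $(t,s^k t)\in\langle(1,s)\rangle$ is in hand.
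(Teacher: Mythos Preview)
Your proposal is correct and follows essentially the same approach as the paper: both verify directly that $\kappa$ is a right congruence containing $(1,s)$ (with the same left-multiplication trick for transitivity), and for the reverse inclusion both observe that $(1,s^k)\in\langle(1,s)\rangle$ (equivalently $(t,s^k t)\in\langle(1,s)\rangle$) and then chain $u\mathrel{\langle(1,s)\rangle} s^m u=s^n v\mathrel{\langle(1,s)\rangle} v$.
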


\begin{proof}
    It is easy to see that $\kappa$ is reflexive, symmetric and right compatible. Further, if $s^nu=s^mv$ and $s^kv=s^lw$ where $m,n,k,l\in \NZ$ then $s^{n+k}u=s^{m+l}w$ so $\kappa$ is a right congruence, which evidently contains $(1,s)$. Conversely, note that $(1,s^k)\in\langle (1,s)\rangle$ for any $k\in \NZ$. It then easily follows that if $s^nu=s^mv$ as above, then $(u,v)\in \langle(1,s)\rangle$.
\end{proof}

Note that in Lemma~\ref{lem:r cong gen by (1,g)} if $s$ is idempotent then $\kappa$ is equal to $r(e\Delta)$.

\begin{cor}\label{cor:idem}  Let  $S$ be  a monoid and let $e\in E(S)$. Then  $r(e\Delta)=\langle (1,e)\rangle$.
\end{cor}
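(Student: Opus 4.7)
The plan is to deduce this immediately from Lemma~\ref{lem:r cong gen by (1,g)} applied with $s=e$, by exploiting the fact that the powers of an idempotent collapse. First I would unpack the definitions: since $\Delta$ is the equality on $S$, the right annihilator in question is simply $r(e\Delta)=\{(u,v)\in S\times S\mid eu=ev\}$. Meanwhile, Lemma~\ref{lem:r cong gen by (1,g)} gives
\[
\langle(1,e)\rangle=\{(u,v)\in S\times S\mid e^mu=e^nv \text{ for some } m,n\in\NZ\}.
\]
Because $e\in E(S)$, we have $e^k=e$ for every $k\geq 1$.

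I would then verify the two inclusions separately. For $\langle(1,e)\rangle\subseteq r(e\Delta)$, given $(u,v)$ with $e^mu=e^nv$, left-multiplying by $e$ yields $e^{m+1}u=e^{n+1}v$, and since $m+1,n+1\geq 1$ idempotency gives $eu=ev$, so $(u,v)\in r(e\Delta)$. For the reverse inclusion $r(e\Delta)\subseteq\langle(1,e)\rangle$, given $eu=ev$ it suffices to take $m=n=1$ in the description above, whence $(u,v)\in\langle(1,e)\rangle$. There is essentially no obstacle here; the only mild subtlety is that the cases $m=0$ or $n=0$ in Lemma~\ref{lem:r cong gen by (1,g)} need to be covered, which is why I bump the exponents up by one in the forward direction rather than arguing pointwise on $(m,n)$.
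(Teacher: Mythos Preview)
Your proof is correct and follows essentially the same route as the paper: the paper simply remarks (immediately before stating the corollary) that when $s$ is idempotent the right congruence $\kappa$ of Lemma~\ref{lem:r cong gen by (1,g)} coincides with $r(e\Delta)$, and offers no further argument. Your write-up just makes this observation explicit, including the small trick of left-multiplying by $e$ to absorb the $m=0$ or $n=0$ cases, which is exactly the right way to flesh out the one-line justification.
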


\begin{cor}
 \label{pro:regwc}
   A regular monoid $S$ is weakly right (resp. left) coherent if and only if $aS\cap bS$ (resp. $Sa\cap Sb$) is finitely generated for all $a,b\in S$.
\end{cor}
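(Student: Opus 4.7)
The plan is to observe that one direction is immediate from the characterisation~\ref{it:cc2}, and that for the other direction the only non-trivial thing to check is that the annihilator condition in~\ref{it:cc2} comes for free in a regular monoid.

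More precisely, the forward implication is a direct consequence of~\ref{it:cc2}. For the converse, assume $S$ is regular and that $aS\cap bS$ is finitely generated for all $a,b\in S$. In view of~\ref{it:cc2}, it remains to show that $r(a\Delta)$ is finitely generated for every $a\in S$. The plan here is to reduce the annihilator of an arbitrary element to that of an idempotent, and then invoke Corollary~\ref{cor:idem}.

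Concretely, given $a\in S$, regularity gives some $y\in S$ with $a=aya$. Let $e:=ya$; then $e\in E(S)$ and $ae=aya=a$. I claim $r(a\Delta)=r(e\Delta)$. For the inclusion $\subseteq$, if $au=av$ then multiplying on the left by $y$ gives $eu=yau=yav=ev$. For the reverse inclusion, if $eu=ev$ then multiplying on the left by $a$ gives $au=aeu=aev=av$. Hence $r(a\Delta)=r(e\Delta)=\langle(1,e)\rangle$ by Corollary~\ref{cor:idem}, and in particular it is finitely generated. Applying~\ref{it:cc2} again finishes the right-handed case; the left-handed statement follows by duality (using $e:=ay$ and $ea=a$).

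I do not expect any serious obstacle: the argument is just the standard reduction of an element's annihilator to an idempotent's annihilator, and the presentation of idempotent annihilators has already been recorded in Corollary~\ref{cor:idem}. The only small point to be careful about is keeping the order of multiplication straight when passing between the right-handed and left-handed versions.
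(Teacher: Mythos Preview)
Your proof is correct and follows essentially the same approach as the paper: both reduce $r(a\Delta)$ to $r(e\Delta)$ for the idempotent $e=ya$ (the paper writes $e=ba$) obtained from a generalised inverse of $a$, and then invoke Corollary~\ref{cor:idem}. The only cosmetic difference is that the paper presents the equality $r(a\Delta)=r(e\Delta)$ as a single chain of implications, whereas you verify the two inclusions separately.
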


\begin{proof}
We prove the first assertion; the second is dual.
By \ref{it:cc2}, it is sufficient to show that, when $S$ is regular, each right annihilator $r(a\Delta)$ is finitely generated.
Let $b\in S$ be such that $aba=a$, and let $e=ba\in E(S)$. 
Note that for any $x,y\in S$ we have
\[
ax=ay\Rightarrow bax=bay \Rightarrow abax=abay\Leftrightarrow ax=ay,
\]
from which $r(a\Delta)=r(e\Delta)$.  The result now follows from Corollary~\ref{cor:idem}.
\end{proof}

The condition that $aS\cap bS$  is finitely generated for all $a,b\in S$ is by virtue of distributivity equivalent to  the condition that the intersection of finitely generated right ideals is finitely generated. Monoids satifying this condition are the focus of \cite{carson:2021} where they are called  {\em right ideal Howson}; under the name of {\em finitely (right) aligned} they play a role in the theory of $C^*$-al\-geb\-ras~\cite{spielberg:2014}.
 
\begin{thm}\label{thm:Ix Tx PTx WRC}
    The monoids $\I_X$, $\T_X$ and $\PT_X$ are all weakly right coherent.
\end{thm}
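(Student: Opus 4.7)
The plan is to apply Corollary~\ref{pro:regwc}: since each of $\I_X$, $\T_X$, $\PT_X$ is regular (see \ref{ss:tran}), it suffices to show that $\alpha S\cap\beta S$ is finitely generated for any $\alpha,\beta$ in the monoid $S$ under consideration. In fact, I would aim for the stronger statement that this intersection is always \emph{principal}, generated by a single explicitly constructed element $\gamma$.

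Using the description of $\leq_\R$ from \ref{ss:tran}, the condition $\gamma\in\alpha S\cap\beta S$ is equivalent to $\dom\gamma\subseteq\dom\alpha\cap\dom\beta$ together with $\kerr\alpha\subseteq\kerr\gamma$ and $\kerr\beta\subseteq\kerr\gamma$. To produce the required generator, I would let $\sim$ denote the smallest equivalence relation on $X$ containing $\ker\alpha\cup\ker\beta$, let $Z$ be the union of those $\sim$-classes that lie entirely inside $\dom\alpha\cap\dom\beta$, and take $\gamma$ to be any element of $\PT_X$ with $\dom\gamma=Z$ whose kernel is the restriction of $\sim$ to $Z$. Such a $\gamma$ exists in each of the three monoids: in $\I_X$ the kernels $\ker\alpha$, $\ker\beta$ are trivial, so $\sim$ is trivial and $\gamma$ is automatically a partial injection; in $\T_X$ we have $\dom\alpha=\dom\beta=X$, whence every $\sim$-class lies in $X$ and $Z=X$; and in $\PT_X$ there is no further constraint.

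Two verifications then remain. First, $\gamma\in\alpha S\cap\beta S$: the inclusion $\dom\gamma\subseteq\dom\alpha$ is immediate, and for $\kerr\alpha\subseteq\kerr\gamma$ one observes that each pair $(x,y)\in\ker\alpha$ lies in a single $\sim$-class $C$; either $C\subseteq Z$, so $(x,y)\in\ker\gamma$, or $C\not\subseteq Z$, in which case $x,y\in X\setminus Z$ and $(x,y)\in (X\setminus Z)^2\subseteq\kerr\gamma$. The padding $(X\setminus\dom\alpha)^2\subseteq(X\setminus Z)^2\subseteq\kerr\gamma$ is immediate from $Z\subseteq\dom\alpha$, and the argument for $\kerr\beta\subseteq\kerr\gamma$ is identical. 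Second, for any $\delta\in\alpha S\cap\beta S$ one must show $\delta\leq_\R\gamma$: the conditions $\kerr\alpha,\kerr\beta\subseteq\kerr\delta$ force $\dom\delta$ to be a union of $\sim$-classes, which together with $\dom\delta\subseteq\dom\alpha\cap\dom\beta$ gives $\dom\delta\subseteq Z=\dom\gamma$; the inclusion $\kerr\gamma\subseteq\kerr\delta$ follows by a parallel bookkeeping argument.

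The principal difficulty is the careful handling of the definition of $\kerr$, particularly the interplay between the proper kernel and the padding $(X\setminus\dom\alpha)^2$. The pivotal observation is that each $\sim$-class is either entirely inside $Z$ or entirely outside $Z$; this keeps the argument uniform across all three monoids and makes the $\I_X$ and $\T_X$ cases arise as immediate specialisations of the $\PT_X$ construction.
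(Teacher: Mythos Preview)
Your proposal is correct and follows essentially the same approach as the paper: both invoke Corollary~\ref{pro:regwc}, construct the same element (your $\gamma$ is the paper's $\epsilon$, your $\sim$ is the paper's $\ker\alpha\vee\ker\beta$, your $Z$ is the paper's $Y$), and verify the two inclusions $\gamma\in\alpha S\cap\beta S$ and $\alpha S\cap\beta S\subseteq\gamma S$ in the same way, treating $\T_X$ and $\I_X$ as specialisations of the $\PT_X$ argument. If anything, your handling of the $\kerr$ bookkeeping is slightly more explicit than the paper's, which packages part of it into the auxiliary observation~\eqref{eq:munu}.
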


\begin{proof}
    As each of $\I_X$, $\T_X$ or $\PT_X$ is regular, by Corollary \ref{pro:regwc} it suffices to check that the intersection, $\al S\cap \be S$ for $S=\I_X,\T_X$ or $\PT_X$, is finitely generated.  We give a detailed proof for $S=\PT_X$, and then outline how to adapt it for $S=\T_X$ and $S=\I_X$.
    
Recall from \ref{ss:reg} and \ref{ss:tran} that for $\mu\in S$ we have
\begin{equation}
\label{eq:alS}
 \mu S = \{\nu\in S\mid \dom \nu\subseteq \dom \mu,\ \kerr \mu \subseteq \kerr \nu\}.
\end{equation}
Also, for any $\mu,\nu \in S$ with $\dom\nu\subseteq \dom\mu$ we have
\begin{equation}
\label{eq:munu}
\kerr\mu\subseteq \kerr\nu\ \Leftrightarrow \text{ every } \ker\nu \text{-class is a union of } \ker\mu \text{-classes}.
\end{equation}

Now consider arbitrary $\al,\be\in S$.  
   Let $Y$ be the union of the equivalence classes of $\ker \al\vee \ker \be$ contained in $\dom \al\cap \dom \be$. Note that $Y$ may be empty.
    Define $\epsilon\in S$ to be any element with $\dom \epsilon=Y$ and $\ker \epsilon= (\ker \al\vee \ker \be)|_Y$.
   
 We claim that
$\alpha S\cap \beta S=\epsilon S$.
The reverse inclusion ($\supseteq$) follows by observing that $\dom\epsilon=Y\subseteq\dom\alpha\cap\dom\beta$, and that
$\ker\epsilon$ classes are unions of both $\ker\alpha$-classes and $\ker\beta$-classes, and then appealing
to \eqref{eq:munu} and then \eqref{eq:alS} to deduce $\epsilon\in \alpha S\cap \beta S$.
For the direct inclusion ($\subseteq$), suppose $\delta\in\alpha S\cap \beta S$.
By \eqref{eq:alS} we have
\[
\dom\delta\subseteq\dom\alpha,\ \dom\delta\subseteq \dom\beta,\ 
\kerr\alpha\subseteq\kerr\delta,\ \kerr\beta\subseteq\kerr\beta.
\]
From this it immediately follows that $\dom\delta\subseteq \dom\alpha\cap\dom\beta$, and that every $\ker\delta$-class is a union of $(\ker\alpha\vee\ker\beta)$-classes.
Since $Y$ is by definition the largest subset of $\dom\alpha\cap\dom\beta$ that is a union of $(\ker\alpha\vee\ker\beta)$-classes, we deduce $\dom\delta\subseteq Y=\dom\epsilon$.
Now, using \eqref{eq:munu} we have that $\kerr\delta\subseteq\kerr\epsilon$, and therefore $\delta\in \epsilon S$ by \eqref{eq:alS}, proving that $\alpha S\cap \beta S=\epsilon S$ as claimed, and hence the theorem for $S=\PT_X$.

    For $S=\T_X$ and $S=\I_X$ the same proof works, but with some conditions concerning domains and kernels omitted. Specifically, as $\dom \al=X$ for every $\al\in \T_X$, all requirements concerning domains become vacuous in this case, and furthermore $\kerr\alpha=\ker\alpha$.
    Likewise, for $\al\in \I_X$  we have $\ker \al=\Delta_{\dom \al}$, so that the conditions regarding kernels are equivalent to those for domains, and only one set needs to be kept. 
\end{proof}

\begin{thm}\label{thm:Ix Tx PTx WLC}
    The monoids $\I_X$, $\T_X$ and $\PT_X$ are all weakly left coherent.
\end{thm}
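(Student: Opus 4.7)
My plan is to mirror the proof of Theorem \ref{thm:Ix Tx PTx WRC} via left-right duality, exploiting the fact that in these transformation monoids the Green's preorder $\leq_\L$ depends only on images, not on both domains and kernels as $\leq_\R$ does. This should actually make the argument shorter than the right-handed counterpart. By the left-handed version of Corollary \ref{pro:regwc}, and regularity of all three monoids (see \ref{ss:tran}), it suffices to show that $S\alpha\cap S\beta$ is finitely generated for every $\alpha,\beta\in S$, where $S$ is each of $\I_X$, $\T_X$ and $\PT_X$.

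Unpacking the $\leq_\L$-description from \ref{ss:tran} (and noting from \ref{ss:reg} that the preorder restricts correctly to the regular subsemigroups $\T_X$ and $\I_X$ of $\PT_X$), for each such $S$ and each $\mu\in S$ we have
$$S\mu = \{\nu\in S : \im\nu\subseteq\im\mu\}.$$
Setting $Z := \im\alpha\cap\im\beta$, this immediately gives $S\alpha\cap S\beta = \{\nu\in S : \im\nu\subseteq Z\}$. The main move is then to exhibit a single $\epsilon\in S$ with $\im\epsilon=Z$; once we have this, $S\epsilon$ equals the set on the right, so $S\alpha\cap S\beta$ is a principal left ideal, hence finitely generated.

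Producing such $\epsilon$ is routine in each case. For $S=\PT_X$, when $Z\neq\emptyset$ I would take any bijection from some subset $W\subseteq X$ with $|W|=|Z|$ onto $Z$, and take $\epsilon$ to be the empty partial map if $Z=\emptyset$; the same $\epsilon$ serves for $\I_X$, since bijections are injective. For $\T_X$ we additionally need $\dom\epsilon=X$: when $Z\neq\emptyset$ any surjection $X\to Z$ works, which exists because $|Z|\leq|X|$ and $X$ is infinite; when $Z=\emptyset$ no element of $\T_X$ has empty image, so $S\alpha\cap S\beta=\emptyset$, which is trivially finitely generated. I do not anticipate any real obstacle; the only minor subtlety is the separate handling of $Z=\emptyset$ for $\T_X$.
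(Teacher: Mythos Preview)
Your proposal is correct and follows essentially the same approach as the paper: reduce via the left-handed Corollary~\ref{pro:regwc} to showing $S\alpha\cap S\beta$ is principal, then produce $\epsilon$ with $\im\epsilon=\im\alpha\cap\im\beta$, handling the empty-intersection case for $\T_X$ separately. The paper's proof is terser but structurally identical; your explicit constructions of $\epsilon$ simply fill in details the paper leaves as ``clear''.
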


\begin{proof}
The proof is analogous to, and easier than, that of Theorem \ref{thm:Ix Tx PTx WRC}.
This time, given $\al,\be\in S=\PT_X$, we need to establish existence of $\epsilon\in S$ such that
$S\al\cap S\be=S\epsilon$. This is equivalent to $\im \epsilon=\im \al\cap \im \be$, and it is clear that such $\epsilon$ exists.
The modification to $S=\T_X$ is equally straightforward, but it should be noted that the possibility $S\al\cap S\be=\emptyset$ arises when $\im \al\cap\im \be=\emptyset$.
No modification is needed for $\I_X$.
\end{proof}

\begin{rem}
One can prove the assertions for $\I_X$ in Theorems \ref{thm:Ix Tx PTx WRC} and \ref{thm:Ix Tx PTx WLC}  by using the fact the intersection of any two principal right ideals in an inverse monoid is again principal \cite[Lemma 5.1.6 (2)]{howie95}.
\end{rem}

\begin{thm}\label{thm: Px WRC}
    The partition monoid $\P_X$ is weakly right coherent and weakly left coherent.
\end{thm}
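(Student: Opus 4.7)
My plan is to exploit the regularity of $\P_X$, established in \ref{ss:part}, and invoke Corollary \ref{pro:regwc} to reduce both claims to showing that every intersection of two principal one-sided ideals in $\P_X$ is finitely generated. The anti-isomorphism $\alpha\mapsto\alpha^*$ from \ref{ss:part} immediately gives the left-handed case from the right-handed one by duality, so I will focus on proving that $\alpha\P_X\cap\beta\P_X$ is finitely generated for arbitrary $\alpha,\beta\in\P_X$.

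Following the strategy of Theorem \ref{thm:Ix Tx PTx WRC}, I will characterise the intersection explicitly using the description of $\leq_\R$ on $\P_X$ from \ref{ss:part}. Writing $\theta:=\ker\alpha\vee\ker\beta$ (the join in the lattice of equivalences on $X$) and $\U:=N_U(\alpha)\cup N_U(\beta)$, I obtain
\[
\alpha\P_X\cap\beta\P_X = \{\gamma\in\P_X : \theta\subseteq\ker\gamma \text{ and } \U\subseteq N_U(\gamma)\}.
\]
I will then show that this set is either empty or principal. Indeed, any $\gamma$ in the intersection must have each $B\in\U$ as a block contained in $X$, hence as a $\ker\gamma$-class; combined with $\theta\subseteq\ker\gamma$, this forces $B$ to coincide with its $\theta$-class, and also forces distinct elements of $\U$ to be disjoint. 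If either of these two conditions fails, the intersection is empty (and trivially finitely generated).

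In the complementary case I will construct a principal generator $\epsilon\in\P_X$ with $\ker\epsilon=\theta$ and $N_U(\epsilon)=\U$: take each $B\in\U$ as an upper block of $\epsilon$, pair each remaining $\theta$-class $C$ with a distinct point $y_C\in X'$ to form a transversal block $C\cup\{y_C\}$ (which is possible because the number of $\theta$-classes is at most $|X|=|X'|$ and $X$ is infinite), and collect any leftover points of $X'$ into one lower block. A direct check against the description of $\leq_\R$ then gives $\alpha\P_X\cap\beta\P_X=\epsilon\P_X$, so the intersection is finitely generated in every case. I do not anticipate serious technical obstacles: the essential content is simply reading off the disjointness/saturation dichotomy from the block structure of partitions, after which both branches of the argument are straightforward.
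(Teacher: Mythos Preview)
Your proposal is correct and follows essentially the same route as the paper: reduce via Corollary~\ref{pro:regwc} to principal right-ideal intersections, describe $\alpha\P_X\cap\beta\P_X$ in terms of $\theta=\ker\alpha\vee\ker\beta$ and $\mathcal{U}=N_U(\alpha)\cup N_U(\beta)$, show it is either empty or principal, and obtain the left-handed statement via the anti-isomorphism $\alpha\mapsto\alpha^\ast$. The only cosmetic difference is that you make the emptiness criterion (each $B\in\mathcal{U}$ must coincide with its $\theta$-class) explicit before constructing the generator, whereas the paper first assumes a witness $\gamma$ exists and then builds its generator $\delta$; your appeal to ``$X$ infinite'' in the cardinality count is unnecessary but harmless.
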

\begin{proof}
    Yet again, by Corollary \ref{pro:regwc}, it is sufficient to show for $\al,\be\in \P_X$ that $\al\P_X\cap \be\P_X$ is finitely generated. 
        
    If $\al\P_X\cap \be\P_X=\emptyset$  we are done. Suppose therefore that there is some $\ga\in \al\P_X\cap \be\P_X$, so that $\ga\P_X\subseteq \al\P_X$ and $\ga\P_X\subseteq \be\P_X$. From \ref{ss:part}, $\ker\alpha\subseteq \ker \gamma$, $N_U(\al)\subseteq N_U(\ga)$, $\ker\beta\subseteq \ker\gamma$ and $N_U(\be)\subseteq N_U(\ga)$. Let $Y$ be the union of the vertices in $N_U(\al)\cup N_U(\be)$. Note that as $N_U(\al)\cup N_U(\be)\subseteq N_U(\ga)$ any block of $N_U(\al)$ either coincides with a block of $N_U(\be)$ or is disjoint from all blocks of $N_U(\be)$, and vice versa. Therefore $N_U(\al)\cup N_U(\be)$ is a partition of $Y$. Also, since $\ga$ exists, there are no edges in $\al$ or $\be$ between $X\backslash Y$ and $Y.$ 
    
    Consider the partition $\Gamma$ of $X\backslash Y$ induced by $(\ker\al\cup\ker\be)|_{X\backslash Y}$. For each part $Z$ in $\Gamma$ choose some $z\in Z.$ Then consider the element $\de\in \P_X$ which has $N_U(\de)=N_U(\al)\cup N_U(\be)$ and the other non-trivial blocks are $Z\cup z^\prime$ for each $Z\in \Gamma$. By construction, $\de\in \al\P_X\cap \be\P_X$, and it is clear from the definition of $\de$ that if $\ga\in \al\P_X\cap \be\P_X$ then $\ga\in \de\P_X$. Hence $\al\P_X\cap \be\P_X=\de\P_X$ and so $\P_X$ is weakly right coherent.

    Weak left coherence of $\P_X$ follows by duality.
\end{proof}


\section{Annihilator congruences}
\label{sec:annihilators}

In this section we prove a number of technical results in the context where we have several elements in a monoid $S$ satisfying some prescribed constraints. We then define a particular right annihilator congruence and show that, under a few assumptions about the chosen elements, it is not finitely generated. The results of this section will enable us to prove our non-coherency results in the next section.
Throughout the section, $\N$ stands for the set $\{1,2,\dots\}$ of natural numbers, and $\N_0=\N\cup\{0\}$.

\begin{prop}\label{prop:Tpres}
The semigroup $P=P(g,h,e)$ defined by the presentation
    \[
   \begin{array}{ccl}
        P(g,h,e)=\langle g,h,e &|&  hg=gh,\  ghg=g,\ hgh=h,\ ghe=egh=e,\ e=ee,\\
         && eg^keh^{k}=g^keh^{k}e, \ eh^keg^{k}=h^keg^{k}e, \ k\in \N\ \rangle.
    \end{array}
    \]
is isomorphic to the inverse submonoid of the symmetric inverse monoid $\I_\Z$ generated by the elements
\[
\gamma: x\mapsto x+1\ (x\in\Z),\ \epsilon: x\mapsto x \ (x\in\Z\setminus\{0\})
\]
via $g\mapsto \gamma$, $h\mapsto \gamma^{-1}$, $e\mapsto \epsilon$.
\end{prop}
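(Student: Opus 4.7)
The plan is to exhibit a homomorphism $\phi\colon P \to \I_\Z$ with the stated action on generators, and then to prove $\phi$ is injective via a normal form argument. For well-definedness it suffices to check that $\gamma$, $\gamma^{-1}$, $\epsilon$ satisfy the defining relations. Since $\gamma$ is a bijection of $\Z$, we have $\gamma\gamma^{-1} = \gamma^{-1}\gamma = \id_\Z$, which instantly yields $hg = gh$, $ghg = g$, $hgh = h$, and $ghe = egh = e$. The relation $e^2 = e$ holds since $\epsilon$ is the identity on its domain. For the families indexed by $k \in \N$, one computes that $\gamma^k \epsilon \gamma^{-k}$ is the identity on $\Z \setminus \{-k\}$ and $\gamma^{-k}\epsilon\gamma^k$ on $\Z \setminus \{k\}$, from which both sides of $eg^k e h^k = g^k e h^k e$ and of $eh^k e g^k = h^k e g^k e$ evaluate to the identity on $\Z \setminus \{0,-k\}$ and $\Z \setminus \{0,k\}$ respectively. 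The image of $\phi$ is then the claimed inverse submonoid.

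A preliminary observation inside $P$ is that $gh = hg$ is a two-sided identity: e.g.\ $g(gh) = g(hg) = (gh)g = g$ via $ghg = g$, with analogous checks for $h$ and $e$ (the latter being immediate from $ghe = egh = e$); call this element $\mathbf{1}$. For the normal form, set $e_k := g^k e g^{-k}$ for $k \in \Z$, where $g^{-n}$ abbreviates $h^n$. Three consequences of the relations drive the argument: (i) $e_k$ is idempotent, since $g^{-k} g^k = \mathbf{1}$ (which follows by an easy induction from $gh = \mathbf{1}$ to give $g^n h^n = \mathbf{1}$); (ii) the sliding identity $g^a e = e_a g^a$ for any $a \in \Z$, obtained by inserting $g^{-a} g^a = \mathbf{1}$; (iii) the $e_k$'s pairwise commute --- directly $e \cdot e_k = e_k \cdot e$ is the content of $eg^k e h^k = g^k e h^k e$ together with its $g,h$-swapped counterpart, and then conjugating by $g^l$ upgrades this to $e_l\, e_{l+k} = e_{l+k}\, e_l$ for all $k,l \in \Z$.

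Now any word in $\{g,h\}$ reduces to a power $g^a$ via $hg = gh$ and $g^k h^k = \mathbf{1}$, so any word $w$ in $\{g,h,e\}$ may be written as $w = g^{a_0} e g^{a_1} e \cdots e g^{a_r}$. Repeated application of $g^a e = e_a g^a$ from the left pushes every $e$ past the intervening $g$'s, giving $w = e_{s_0} e_{s_1} \cdots e_{s_{r-1}} g^{s_r}$ with $s_i = a_0 + \cdots + a_i$. Commutativity and idempotence of the $e_k$'s then produce a canonical form $e_{k_1} e_{k_2} \cdots e_{k_t} g^n$ with $k_1 < k_2 < \cdots < k_t$ and $n \in \Z$. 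Under $\phi$ this canonical form maps to the partial bijection $x \mapsto x + n$ with domain $\Z \setminus \{-k_1, \ldots, -k_t\}$; both $n$ and the set $\{-k_1,\ldots,-k_t\}$ are recoverable from this map, so distinct normal forms have distinct $\phi$-images, and $\phi$ is injective. The main obstacle in the plan is the normal form step: establishing $g^a e = e_a g^a$ cleanly for $a$ of either sign via $g^k h^k = \mathbf{1}$, and bootstrapping the commutativity of the entire family $\{e_k : k \in \Z\}$ from the given relations, which directly assert only $e \cdot e_k = e_k \cdot e$.
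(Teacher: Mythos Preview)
Your proposal is correct and follows essentially the same approach as the paper: verify the relations in $\I_\Z$ to get a surjective homomorphism, observe that $gh$ is an identity, show the conjugates $e_k=g^keg^{-k}$ are commuting idempotents, derive the normal form $e_{k_1}\cdots e_{k_t}g^n$ with $k_1<\cdots<k_t$, and read off injectivity from the image $x\mapsto x+n$ with domain $\Z\setminus\{-k_1,\ldots,-k_t\}$. The only cosmetic differences are that you isolate the ``sliding identity'' $g^ae=e_ag^a$ and deduce general commutativity by conjugating $e\cdot e_k=e_k\cdot e$ by $g^l$, whereas the paper performs the same commutativity check by a single direct computation.
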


\begin{proof}
Let $P'$ denote the inverse submonoid of $\I_\Z$ generated by $\gamma$ and $\epsilon$.
A routine computation verifies that $\gamma$, $\gamma^{-1}$ and $\epsilon$ satisfy the defining relations for $P$.
Hence the above mapping induces an onto homomorphism $\phi:P\rightarrow P'$. We will show that $\phi$ is also injective.

Working in $P$,
it is immediate from the presentation that $gh=hg$ is the identity element. In the remainder of the proof we will write this element as $1$.
The element $h$ is the group inverse of $g$, so we will write $g^{-1}$ instead of $h$.

Since $g^m e g^{-m}g^m e g^{-m}=g^me1eg^{-m}=g^meg^{-m}$
it follows that all $g^m e g^{-m}$ with $m\in\Z$ are idempotents.
These idempotents commute:
\[
	(g^neg^{-n})(g^meg^{-m}) = g^m(g^{n-m}eg^{m-n})e g^{-m} = g^me(g^{n-m}eg^{m-n})g^{-m} = (g^meg^{-m})(g^neg^{-n}).
\]
Every element $u$ of $P$ is equal to a product of the form
$u=g^{i_1}eg^{i_2}eg^{i_3}e\dots eg^{i_{n-1}}eg^{i_n}$ 
with $n\geq 1$, $i_1,i_n\in\N_0$ and $i_j\in\N$ for $2\leq j\leq n-1$, which in turn can be written as:
\[
    u  =  (g^{i_1}eg^{-i_1})(g^{i_1+i_2}eg^{-(i_1+i_2)})\dots (g^{i_1+\dots+i_{n-1}}eg^{-(i_1+\dots +i_{n-1})})g^{i_1+\dots+i_n}.
\]
Using idempotency and commutativity, it follows that $u$ can be written as
\begin{equation}\label{eq:unf}
u=(g^{k_1}eg^{-k_1})\dots (g^{k_p}eg^{-k_p})g^k,
\end{equation}
where $p\geq 0$, and where $k_1,\dots, k_p$ and $k$ are integers with $k_1<\dots <k_p$.

To show that $\phi$ is injective, let $u,v\in P$ be such that $u\phi=v\phi$.
Let $u$ be written as in \eqref{eq:unf}, and write $v$ analogously as
\[
v= (g^{l_1}eg^{-l_1})\dots (g^{l_q}eg^{-l_q})g^l,
\]
with $q\geq 0$, and where $l_1,\dots, l_q$ and $l$ are integers with $l_1<\dots <l_q$.
Now, $u\phi$ is the element $\mu$ of $\I_\Z$ with domain $\Z\setminus\{ -k_1,\dots, -k_p\}$ and $x\mu=x+k$ for all $x$ in that domain.
Likewise, $v\phi$ is the element $\nu$ of $\I_\Z$ with domain $\Z\setminus\{ -l_1,\dots, -l_q\}$ and $x\nu=x+l$ for all $x$ in that domain.
From $\mu=\nu$ we obtain $k=l$, and recalling that $k_1<\dots<k_p$ and $l_1<\dots<l_q$, also that $p=q$ and $k_t=l_t$ for 
$t=1,\dots,p$. It then follows that $u=v$ in $P$, and the proof is complete.
\end{proof}

From the concrete representation of $P$ given in Proposition \ref{prop:Tpres} it is easy to see that
the group of units of $P$ is the infinite cyclic subgroup $G$ generated by $g$. 
The decomposition \eqref{eq:unf} from the proof of Proposition \ref{prop:Tpres} implies that
$P$ is generated by $G$ and the single idempotent $e$. 
Such semigroups were considered by McAlister in \cite{mcalister:1998}. 
It is possible to deduce that $P$ is inverse via this general approach, but the above argument seems more straightforward in this instance. 
In fact, $P$ is a semidirect product of $G$, and the semilattice of idempotents $E(P)$,
with $G$ acting on $E(P)$ by conjugation; this can be derived from the proof of Proposition \ref{prop:Tpres}.

Motivated by $P$, we now list some properties of elements $g,h,e$ in a monoid $S$, where $g$ is regular, $h$ is a generalised inverse of $g$ with $gh=hg$, and $e$ is an idempotent. These properties will transpire to be a barrier to coherence.
Recall  from \ref{ss:ids} that the set of idempotents $E(S)$ of a monoid $S$ is equipped with the partial order $\leq$ defined via
$e\leq f\Leftrightarrow ef=fe=e$. 
The properties are:
    \begin{enumerate}[label=\normalfont(NC\arabic*)]
        \item $hge=e=ehg$;\label{it:a1}
        \item $e(g^neh^{n})=(g^neh^{n})e$ and $e(h^neg^{n})=(h^neg^{n})e$ for all $n\in \N$;\label{it:a2}
        \item         for $m,n\in\N$ we have $g^m e h^m\nleq h^neg^n$ and $h^neg^n \nleq g^meh^m$; furthermore, when $m\neq n$, we also have $g^meh^m\nleq g^neh^n$ and $h^meg^m \nleq h^neg^n$;\label{it:a3}
        \item for all $n\in \N$ we have that $eg^neh^n\not\leq g^keh^k$ for any $0<k<n$ and $eg^neh^n\not\leq h^keg^k$ for any $0<k\leq n$.  \label{it:a4}
    \end{enumerate}
    
    Note that, as in the proof of Proposition \ref{prop:Tpres}, conditions \ref{it:a1}, \ref{it:a2} imply that all the elements appearing in \ref{it:a3}, \ref{it:a4} are idempotents.

 Informally, condition \ref{it:a3} asserts that the conjugates of $e$ by the integer powers of $g$ are all distinct and form an infinite antichain in $E(S)$.
 Condition \ref{it:a4} then gives information about the position of an element of the form $eg^neh^n$
 with respect to a finite portion $\{ g^keh^k,h^keg^k\::\: k=1,\dots,n\}$ of this antichain: it (obviously) lies below $g^neh^n$ and nothing else.

\begin{prop}
\label{pro:PA}
The elements $g,h,e$ of the monoid $P$ from Proposition \ref{prop:Tpres} satisfy all the properties \ref{it:a1}--\ref{it:a4}.
\end{prop}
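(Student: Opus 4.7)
The plan is to carry out the verification inside the concrete realisation $P \cong P'\leq \I_\Z$ supplied by Proposition \ref{prop:Tpres}. Under the isomorphism $\phi$, the three elements become $g\mapsto\gamma$, $h\mapsto\gamma^{-1}$ and $e\mapsto\epsilon$, where $\gamma:x\mapsto x+1$ is a bijection of $\Z$ and $\epsilon$ is the identity on $\Z\setminus\{0\}$. Since $\phi$ is injective, it suffices to check that $\gamma,\gamma^{-1},\epsilon$ satisfy (NC1)--(NC4).

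Two general observations make the check almost mechanical. First, every idempotent appearing in (NC3)--(NC4) is of the form $\gamma^k\epsilon\gamma^{-k}$, $\gamma^{-k}\epsilon\gamma^k$, or $\epsilon\gamma^n\epsilon\gamma^{-n}$, and each such element is a partial identity in $\I_\Z$; in particular it is determined by its domain. Concretely, one computes
\[
\gamma^k\epsilon\gamma^{-k}=\id_{\Z\setminus\{-k\}},\qquad \gamma^{-k}\epsilon\gamma^k=\id_{\Z\setminus\{k\}},\qquad \epsilon\gamma^n\epsilon\gamma^{-n}=\id_{\Z\setminus\{0,-n\}}.
\]
Second, for partial identities $\id_A,\id_B\in\I_\Z$ the natural order satisfies $\id_A\leq\id_B$ iff $A\subseteq B$, a fact recalled in \ref{ss:ids}. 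Thus (NC3) and (NC4) reduce to a handful of elementary set-containment comparisons between complements of one- and two-element subsets of $\Z$.

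With these preliminaries in place the verification is immediate. For (NC1), $\gamma^{-1}\gamma=1_{\I_\Z}$, so $\gamma^{-1}\gamma\epsilon=\epsilon=\epsilon\gamma^{-1}\gamma$. For (NC2), both $\epsilon(\gamma^n\epsilon\gamma^{-n})$ and $(\gamma^n\epsilon\gamma^{-n})\epsilon$ equal $\id_{\Z\setminus\{0,-n\}}$, and similarly with the roles of $\gamma,\gamma^{-1}$ reversed. For (NC3) with $m,n\in\N$: $\gamma^m\epsilon\gamma^{-m}=\id_{\Z\setminus\{-m\}}$ and $\gamma^{-n}\epsilon\gamma^n=\id_{\Z\setminus\{n\}}$; since $-m<0<n$, the two defect points differ, so neither complement is contained in the other, giving both non-comparability statements. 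The same comparison of the defect integers $-m\neq -n$ (respectively $m\neq n$) handles the case $m\neq n$. For (NC4), $\epsilon\gamma^n\epsilon\gamma^{-n}=\id_{\Z\setminus\{0,-n\}}$; containment in $\Z\setminus\{-k\}$ would force $-k\in\{0,-n\}$, which fails for $0<k<n$, while containment in $\Z\setminus\{k\}$ would force $k\in\{0,-n\}$, which fails for $0<k\leq n$ since $k\geq 1$ and $-n\leq -1$.

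There is no genuine obstacle here: the only point requiring care is the domain computation for $\epsilon\gamma^n\epsilon\gamma^{-n}$, where one must track the two successive partial-identity restrictions (first $x\neq 0$, then $x+n\neq 0$ after translation) to conclude that exactly the points $0$ and $-n$ are removed. Once this is done correctly, everything else is direct comparison of finite sets of excluded integers.
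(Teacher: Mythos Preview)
Your proof is correct and follows essentially the same approach as the paper: both verify \ref{it:a3} and \ref{it:a4} in the concrete copy $P'\leq\I_\Z$ by computing the domains $\dom(\gamma^k\epsilon\gamma^{-k})=\Z\setminus\{-k\}$ and $\dom(\epsilon\gamma^n\epsilon\gamma^{-n})=\Z\setminus\{0,-n\}$ and then reducing the idempotent order to domain inclusion. The only cosmetic difference is that the paper dispatches \ref{it:a1} and \ref{it:a2} by pointing to the defining relations of $P$, whereas you verify them directly in $\I_\Z$; both are valid.
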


\begin{proof}
The necessary pre-conditions for $g,h,e$, as well as properties \ref{it:a1} and \ref{it:a2}, follow from the defining presentation for $P$.
To prove that \ref{it:a3} and \ref{it:a4} are satisfied we work in the isomorphic copy $P'$ of $P$ given in Proposition \ref{prop:Tpres}.
From \ref{ss:reg} and \ref{ss:tran} we have that the relation $\eta\leq \zeta$ on $E(\I_\Z)$ is equivalent to $\dom(\eta)\subseteq \dom(\zeta)$. Since
$$\dom(\epsilon\ga^k\epsilon\ga^{-k})=\Z\backslash \{0,-k\}\quad \text{and}\quad \dom(\ga^{k}\epsilon\ga^{-k})=\Z\backslash \{-k\} $$
it follows  that $\{\ga^n\epsilon\ga^{-n}\mid n\in \Z\}$ is an antichain with respect to the natural partial order on $E(\I_\Z)$ and that $\epsilon\ga^n\epsilon\ga^{-n}\leq \ga^k\epsilon\ga^{-k}$ only when $n=k$ or $k=0$. Thus conditions \ref{it:a3} and \ref{it:a4} are satisfied, and the proof is complete.  
\end{proof}

Our main objective in this section is to prove the following result.

\begin{thm}\label{thm:helptheorem}
    Let $S$ be a monoid, let $g\in S$ be regular with $h\in S$ a generalised inverse of $g$ such that $hg=gh$, and let $e\in E(S)$.
    If these elements satisfy \ref{it:a1}--\ref{it:a4}
then $S$ is not right or left coherent.
\end{thm}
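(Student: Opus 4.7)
The plan is to prove that $S$ is not right coherent by applying the coherency criterion \ref{it:cc1}: I will exhibit a finitely generated right congruence $\rho$ on $S$ and an element $a\in S$ such that the right annihilator $r(a\rho)$ fails to be finitely generated. Non-left-coherence will follow from a parallel argument, using the symmetry of conditions \ref{it:a1}--\ref{it:a3} under the swap $g\leftrightarrow h$ together with a dual deployment of \ref{it:a4}; equivalently, one reruns the argument in $S^{op}$ with $g$ and $h$ interchanged.

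Taking $a=e$, I would let $\rho$ be a finitely generated right congruence generated by a short list of pairs designed to couple $e$ to its $g$-conjugates (guided by the presentation of $P(g,h,e)$ from Proposition~\ref{prop:Tpres}), so that after right multiplication by various powers of $g$ and $h$ the congruence forces many identifications but remains finitely generated. Next, for each $n\in\N$, I would exhibit an explicit pair $(u_n,v_n)$ lying in $r(e\rho)$, built from the antichain idempotent $g^neh^n$ (or, symmetrically, $h^neg^n$), and verify membership by writing down an explicit $Y$-sequence over the generators of $\rho$ with right multipliers taken from the subsemigroup generated by $g,h,e$.

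The crux is to show that $r(e\rho)$ cannot be finitely generated. Suppose, for contradiction, that a finite generating set $Z$ exists. Every pair in $Z$ involves only finitely many of the antichain idempotents $g^keh^k$ and $h^keg^k$; let $N$ be a bound on their indices. For $n>N$, I would analyse any putative $Y$-sequence over $Z\cup Z^{-1}$ linking $eu_n$ to $ev_n$, and derive a contradiction: \ref{it:a3} ensures that the idempotents $g^keh^k$, $h^keg^k$ are pairwise incomparable in the natural order on $E(S)$, while \ref{it:a4} pins down the unique position of $eg^neh^n$ below $g^neh^n$ (and below nothing else in the antichain), so that $g^neh^n$ cannot be manufactured as a factor from strictly lower antichain levels.

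The main obstacle is the combinatorial control of $Y$-sequences in Step~3. One needs to design a complexity invariant on pairs --- for instance, the maximum $n$ for which $g^neh^n$ or $h^neg^n$ appears as a factor in either component --- and argue that it is preserved under the right-multiplication and substitution moves of a $Y$-sequence derived from $Z$. Conditions \ref{it:a3} and \ref{it:a4} are precisely what prevent the antichain structure from collapsing or being bypassed, and the delicate point is to rule out shortcuts produced by multipliers lying outside the subsemigroup $\langle g,h,e\rangle$. Once the non-finite generation is established, \ref{it:cc1} yields the failure of right coherence, and the parallel left-handed analysis finishes the proof.
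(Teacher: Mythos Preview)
Your overall strategy is right, but Step~3 as written has a genuine gap, and the paper's proof fills it by a device you have not described.

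The problem is your sentence ``Every pair in $Z$ involves only finitely many of the antichain idempotents $g^keh^k$ and $h^keg^k$; let $N$ be a bound on their indices.'' A hypothetical finite generating set $Z$ for $r(e\rho)$ consists of arbitrary pairs from $S\times S$; there is no sense in which its elements ``involve'' the antichain idempotents at all, and the complexity invariant you propose (``the maximum $n$ for which $g^neh^n$ or $h^neg^n$ appears as a factor'') is not defined on general elements of $S$. You flag this yourself (``the delicate point is to rule out shortcuts produced by multipliers lying outside $\langle g,h,e\rangle$''), but this is not a detail to be cleaned up later: conditions \ref{it:a3} and \ref{it:a4} speak only about the specific idempotents $g^keh^k$, $h^keg^k$, $eg^neh^n$, and give no handle on arbitrary elements of $S$ or $Z$.

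The paper sidesteps this by never analysing a hypothetical $Z$. Instead it takes $\rho=\langle(1,g)\rangle$ (so just a single pair, not a list coupling $e$ to its conjugates) and first proves an explicit formula for $r(e\rho)$ (Lemma~\ref{lem:gen set annihilator}): it is generated by the infinite set $\{(1,e)\}\cup\{(g^ke,h^keg^k),(h^ke,g^keh^k)\mid k\in\N\}$. Writing $Y_n$ for the truncation to $k\leq n$, one has $r(e\rho)=\bigcup_n\langle Y_n\rangle$. The combinatorial work is then to show $\langle Y_{n-1}\rangle\subsetneq\langle Y_n\rangle$ (Proposition~\ref{thm:tauN neq tauN+}) by analysing $Y_{n-1}$-sequences. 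The key point: in such a sequence the multipliers $t_i$ are indeed arbitrary in $S$, but the generator pairs $(c_i,d_i)$ come from the small explicit list $Y_{n-1}$, so one can check case by case, using \ref{it:a3} and \ref{it:a4} together with the $\leq_\R$ constraints forced by $c_it_i=d_{i-1}t_{i-1}$, that every step satisfies $(c_i,d_i)\in\{(1,e),(e,1)\}$ and $et_i=eg^ne$, leading to a contradiction. Non-finite-generation then follows from the strictly ascending chain. Your duality remark for the left-handed version is correct.
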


\begin{cor}\label{cor:Pforbid}
If a monoid $S$ contains the monoid $P(g,h,e)$ from Proposition \ref{prop:Tpres} as a subsemigroup, then $S$ is not left or right coherent.
\end{cor}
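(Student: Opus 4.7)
The plan is to reduce directly to Theorem \ref{thm:helptheorem} by verifying that the generators $g, h, e$ of $P(g,h,e)$, viewed as elements of $S$ via the given subsemigroup embedding, satisfy all the hypotheses of that theorem.

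First I would observe that the preconditions — $g$ regular with generalised inverse $h$, $gh = hg$, and $e$ idempotent — are immediate consequences of the defining relations $ghg = g$, $hgh = h$, $gh = hg$, $e = ee$ of $P$, which continue to hold between the same elements when they are viewed inside $S$, since $P$ is a subsemigroup and products in $P$ coincide with products in $S$. By the same token, condition \ref{it:a1} is precisely the defining relation $hge = e = ehg$, and condition \ref{it:a2} is the infinite family of commutativity relations $eg^keh^k = g^keh^ke$ and $eh^keg^k = h^keg^ke$; both transfer verbatim to $S$.

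The more delicate step is to verify \ref{it:a3} and \ref{it:a4}, since these are negative assertions about the natural partial order $\leq$ on the set of idempotents. By Proposition \ref{pro:PA}, all the required non-comparabilities already hold in $P$. The key point, noted in Subsection \ref{ss:ids}, is that the order $\leq$ is defined purely by multiplicative equalities ($e \leq f \iff ef = fe = e$), and therefore is absolute under subsemigroup inclusion: for any two idempotents of $P$, one has $e \leq f$ in $P$ if and only if $e \leq f$ in $S$. Consequently the non-comparabilities in \ref{it:a3} and \ref{it:a4} transfer from $P$ to $S$, the hypotheses of Theorem \ref{thm:helptheorem} are met, and the conclusion follows.

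There is no real obstacle beyond this bookkeeping, since all the substantive work has been done in establishing Proposition \ref{pro:PA} and Theorem \ref{thm:helptheorem}. The only point worth flagging is the use of the fact that the idempotent order is absolute in subsemigroups; this is precisely what allows the negative conditions \ref{it:a3}, \ref{it:a4} to lift from $P$ to an arbitrary overmonoid $S$.
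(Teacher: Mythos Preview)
Your proposal is correct and matches the intended argument: the paper leaves the corollary without an explicit proof precisely because it follows immediately from Proposition~\ref{pro:PA} and Theorem~\ref{thm:helptheorem}, using the absoluteness of the idempotent order under subsemigroup inclusion noted in Subsection~\ref{ss:ids}. You have identified exactly the right ingredients and the one subtlety that needs flagging, namely that the negative conditions \ref{it:a3} and \ref{it:a4} lift to $S$ because $\leq$ on idempotents is defined by equations.
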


The remainder of this section comprises the proof of Theorem~\ref{thm:helptheorem}. We shall prove that the conditions imply that $S$ is not right coherent; as the conditions are left-right dual we deduce that $S$ is also not left coherent.

The first stage of our proof for Theorem \ref{thm:helptheorem} is the following technical lemma in which we give an explicit generating set for a right annihilator congruence. 

\begin{lemma}\label{lem:gen set annihilator}
    Let $g\in S$ be regular, let $h\in S$ be a generalised inverse of $g$ such that $hg=gh$, and let $e\in E(S)$ be such that \ref{it:a1} and \ref{it:a2} hold. Let $\rho:=\langle(1,g)\rangle$. Then the right annihilator $r(e\rho)$ is given by
    \begin{align}
        \label{eq:rerho1}
        r(e\rho) & =\{(u,v)\in S\times S\mid \textup{for some } n\in\NZ,\ g^neu=ev\ \text{or}\ h^neu=ev\} \\
        \label{eq:rerho2}
        & =\langle \{(1,e)\}\cup \{(g^ne,h^{n}eg^n)\mid n\in \N\} \cup \{(h^ne,g^{n}eh^n)\mid n\in \N\}\rangle.
    \end{align}
\end{lemma}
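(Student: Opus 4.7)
The plan is to establish the two equalities in sequence, first \eqref{eq:rerho1} and then \eqref{eq:rerho2}.

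For \eqref{eq:rerho1}, I unfold the definition: $(u,v)\in r(e\rho)$ iff $(eu,ev)\in \rho$, and by Lemma \ref{lem:r cong gen by (1,g)} this means $g^m eu=g^k ev$ for some $m,k\in\NZ$. The entire argument rests on the element $f:=gh=hg$, which I verify is a central idempotent fixing $g$, $h$, and (by \ref{it:a1}) $e$: from $ghg=g$ combined with $gh=hg$ one obtains $g^2h=g$, hence $fg=gf=g$ and symmetrically $fh=hf=h$, so $h^ng^n=g^nh^n=f$ and $fe=ef=e$. If $m\geq k$, left-multiplying $g^m eu=g^k ev$ by $h^k$ then collapses the right side to $ev$ and the left side to $g^{m-k}eu$; the case $m\leq k$ is symmetric and yields $h^{k-m}eu=ev$. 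The converse direction is immediate.

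Writing $\tau$ for the right congruence appearing on the right-hand side of \eqref{eq:rerho2}, the inclusion $\tau\subseteq r(e\rho)$ amounts to checking each generator. The pair $(1,e)$ is trivial. For $(g^n e,h^n e g^n)$, property \ref{it:a2} gives $e\cdot h^n e g^n=h^n e g^n e=h^n e\cdot(g^n e)$, placing this pair in $r(e\rho)$ by \eqref{eq:rerho1} with the $h$-clause and index $n$; the family $(h^n e,g^n e h^n)$ is handled dually.

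The main obstacle is the reverse inclusion $r(e\rho)\subseteq\tau$, which requires building explicit $Y$-sequences. Given $(u,v)$ with $g^n eu=ev$ (the case $h^n eu=ev$ is symmetric), the $n=0$ case collapses to two applications of $(1,e)$. For $n\geq 1$ the crucial trick is to rewrite $eu=h^n g^n eu=h^n ev$, so that $eu$ appears in the exact form $h^n e\cdot v$ required by the left-hand side of the generator $(h^n e,g^n e h^n)$. This produces a four-step $Y$-sequence
\[
u=1\cdot u,\ e\cdot u=h^n e\cdot v,\ g^n e h^n\cdot v=1\cdot g^n e h^n v,\ e\cdot g^n e h^n v=e\cdot v,\ 1\cdot v=v,
\]
applying the generators $(1,e)$, $(h^n e,g^n e h^n)$, $(1,e)$, $(e,1)$ in turn. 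The only non-routine identification is the penultimate equality $e\cdot g^n e h^n v=ev$: by \ref{it:a2} one moves $e$ past $g^n e h^n$ to get $g^n e h^n\cdot ev=g^n e h^n g^n eu=g^n e f eu=g^n eu=ev$, using $h^n g^n=f$ and $fe=e$. Spotting this particular sequence---which uses the central idempotent $f$ to cancel an internal block $h^n g^n$---is the heart of the argument.
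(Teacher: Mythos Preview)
Your proof is correct and follows essentially the same approach as the paper's: establish \eqref{eq:rerho1} by cancelling powers of $g$ via the idempotent $f=gh$, verify the generators lie in $r(e\rho)$ using \ref{it:a2}, and then build an explicit $Y$-sequence for the reverse inclusion. The only noteworthy difference is cosmetic: for the case $g^n eu=ev$ the paper produces a three-step sequence going from $v$ to $u$ using the generator $(g^n e,h^n e g^n)$ directly (writing $ev=(g^n e)\cdot eu$), whereas you first rewrite $eu=h^n ev$ and go from $u$ to $v$ via $(h^n e,g^n e h^n)$, picking up one extra $(1,e)$ step. (A minor quibble: $f$ is not literally central in $S$, only in the subsemigroup generated by $g,h,e$; but that is all you use.)
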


\begin{proof}
    Note that \ref{it:a1} implies $(hg)^ne=e$ for all $n.$ We remark, under the assumptions on $g,h,e$, that $g^neu=ev$ is equivalent to $eu=h^nev.$ Indeed, suppose $g^neu=ev,$ then
    $$h^{n}(ev) = h^{n}(g^{n}eu) = (hg)^{n}(eu) = eu,$$
    and the converse is analogous.

    From the definition of the right annihilator and Lemma \ref{lem:r cong gen by (1,g)} we have that
    $$r(e\rho)=\{(u,v)\in S\times S\mid \exists m,n\in\NZ,\ g^meu=g^nev\}.$$
  The fact that $g$ and $h$ are inverses of each other in the subsemigroup of $S$ generated by $g,h,e$
    establishes \eqref{eq:rerho1}.

For \eqref{eq:rerho2}, let $\sigma$ denote the set on the right hand side. As $r(e\Delta)\subseteq r(e\rho)$, by Corollary \ref{cor:idem}, we have $(1,e)\in r(e\rho)$. Also, for each $n\in \N$,
    $$h^{n}e(g^ne) = e(h^{n}eg^n),$$ 
    which implies that $(g^ne,h^{n}eg^n)\in r(e\rho)$. By the same argument, $(h^ne,g^neh^n)\in r(e\rho)$, hence $\sigma\subseteq r(e\rho)$.

    For the reverse inclusion suppose that $g^neu=ev,$ for some $n\in \N$. Then
    $$v=1v\ \sigma\ ev = g^neu = (g^ne)eu\ \sigma \ h^{n}eg^neu = eh^{n}eg^neu = eh^{n}ev = eu\ \sigma\ 1u = u.$$
    Analogously, $h^neu=ev$ implies $(u,v)\in\sigma$, and thus $r(e\rho)=\sigma,$ as required.
\end{proof}

The following provides the main step in proving Theorem~\ref{thm:helptheorem}, it is again technical and carries forward several assumptions from the previous lemma.  

\begin{prop}\label{thm:tauN neq tauN+}
    Let $g\in S$ be regular, let $h\in S$ be a generalised inverse of $g$ such that $hg=gh$, and let $e\in E(S)$ be such that 
    \ref{it:a1}--\ref{it:a4} hold. For each $n\in\N$ let 
    \[Y_n=\{(1,e)\}\cup \{(g^ke,h^{k}eg^k)\mid 0<k\leq n\}\cup \{(h^ke,g^{k}eh^k)\mid 0<k\leq n\}.\] 
    Then 
    $\langle Y_{n}\rangle \neq \langle Y_{n+1}\rangle$
    for any $n\in \N$.
\end{prop}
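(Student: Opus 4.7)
The plan is to exhibit a pair which lies in $Y_{n+1}\subseteq\langle Y_{n+1}\rangle$ but does not belong to $\langle Y_n\rangle$. The natural candidate is $(g^{n+1}e,h^{n+1}eg^{n+1})$, which is manifestly in $Y_{n+1}$ by the definition of that set, so it remains only to show that it does not belong to $\langle Y_n\rangle$.

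We argue by contradiction. Suppose that $(g^{n+1}e,h^{n+1}eg^{n+1})\in\langle Y_n\rangle$. Using \ref{it:a1} together with the commuting inverse relations $hg=gh$, $ghg=g$, $hgh=h$, one easily derives the absorptions $(gh)^ke=e(gh)^k=(hg)^ke=e(hg)^k=e$ for all $k\geq 0$. Right-multiplying the assumed pair by $eh^{n+1}$ and simplifying the right-hand term via \ref{it:a2} (which gives $h^{n+1}eg^{n+1}e=eh^{n+1}eg^{n+1}$) followed by the above absorption yields
\[
(e_{n+1},\ eh^{n+1}e)\in\langle Y_n\rangle,\qquad\text{where}\qquad e_{n+1}:=g^{n+1}eh^{n+1}.
\]
A further right-multiplication by $e$, combined with \ref{it:a2}, then produces
\[
(p_{n+1},\ eh^{n+1}e)\in\langle Y_n\rangle,\qquad\text{where}\qquad p_{n+1}:=ee_{n+1}=e_{n+1}e=eg^{n+1}eh^{n+1},
\]
so $p_{n+1}$ is exactly the idempotent appearing in \ref{it:a4}.

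We now take a shortest $Y_n$-sequence
\[
p_{n+1}=c_1t_1,\ d_1t_1=c_2t_2,\ \dots,\ d_mt_m=eh^{n+1}e
\]
with each $(c_i,d_i)\in Y_n\cup Y_n^{-1}$. The decisive observation is that each $c_i$ and $d_i$ is drawn from the explicit finite set $\{1,e,g^ke,h^ke,e_k,f_k:0<k\leq n\}$, where $e_k=g^keh^k$ and $f_k=h^keg^k$. The strategy is to right-multiply the entire sequence by a carefully chosen idempotent (our candidates are $e$, $p_{n+1}$, or $e_{n+1}$, each of which absorbs $eh^{n+1}e$ appropriately via the absorptions above) and to repeatedly apply \ref{it:a2} to commute $e$ past each $e_k$ and $f_k$. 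This should collapse every intermediate term $c_it_i$ into the semilattice fragment $\{e,e_k,f_k,p_k:0\leq k\leq n+1\}$ of $E(S)$, turning the sequence of equalities in $S$ into a chain of idempotent relations in a well-understood finite subset of $E(S)$. Such a chain will then be seen to force one of the inequalities $p_{n+1}\leq e_k$ (for some $0<k<n+1$) or $p_{n+1}\leq f_k$ (for some $0<k\leq n+1$), in direct contradiction with \ref{it:a4}.

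The main obstacle is precisely the control of the intermediate non-idempotent terms $c_it_i$ arising from arbitrary $t_i\in S$: while the $c_i$ range over a finite set, the $t_i$ do not, so some flattening procedure is needed to reduce the sequence to one involving only the idempotents $e_k,f_k,p_k$. This is where the commutation \ref{it:a2} and the absorptions arising from \ref{it:a1} are used most heavily. Once the reduction is achieved, conditions \ref{it:a3} and \ref{it:a4} deliver the contradiction and establish $\langle Y_n\rangle\neq\langle Y_{n+1}\rangle$.
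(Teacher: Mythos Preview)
Your choice of witness pair is the same as the paper's (up to the index shift $n\leftrightarrow n+1$), and your preliminary right-multiplications are computed correctly. The gap is exactly where you flag it: the ``flattening procedure'' is the entire content of the argument, and your plan does not say how to carry it out. Right-multiplying the whole $Y_n$-sequence by $e$, $e_{n+1}$ or $p_{n+1}$ does nothing to control the $t_i$; you still have arbitrary elements of $S$ in each intermediate term, and the hypotheses \ref{it:a1}--\ref{it:a4} speak only about a handful of specific idempotents, not about how they interact with general elements of $S$. There is no evident mechanism by which the sequence collapses into the finite semilattice fragment you describe.

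The paper's argument sidesteps this obstacle entirely, and in fact your right-multiplications make the situation harder rather than easier. Work directly with a shortest $Y_{n-1}$-sequence from $g^ne$ to $h^neg^n$ (equivalently, from $g^{n+1}e$ to $h^{n+1}eg^{n+1}$ with $Y_n$). The controlling tool is not multiplication by idempotents but Green's preorder: from $c_1t_1=g^ne$ one gets $g^ne\leq_\R c_1$, hence $g^neh^n\leq_\R c_1c_1^{-1}$, and then \ref{it:a3} (together with \ref{ss:reg}) rules out every value of $c_1$ except $1$ and $e$. One then maintains the inductive invariant $et_i=eg^ne$, and at each step the same $\leq_\R$ reasoning, now invoking \ref{it:a4} for the idempotent $eg^neh^n$, forces $(c_i,d_i)\in\{(1,e),(e,1)\}$. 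At the end this yields $eg^ne=et_m=ed_mt_m=eh^neg^n\leq_\R h^neg^n$, contradicting \ref{it:a4}. No collapse to a semilattice is needed; the $t_i$ remain opaque throughout and only the single scalar invariant $et_i=eg^ne$ is tracked.
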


\begin{proof}
    It suffices to show that $(g^ne,h^{n}eg^n)\notin \langle Y_{n-1}\rangle$ for any $n>1$. Aiming for a contradiction we suppose that there is an $n$ for which this is untrue. We note that $g^ne\neq h^{n}eg^n$, since $g^{n}e\ \R\ g^{n}eh^n$ and 
    the elements $g^neh^n$ and $h^neg^n$ are incomparable by \ref{it:a3}. Thus there is a $Y_{n-1}$-sequence of length $m>0$ from $g^ne$ to $h^{n}eg^n.$ Consider such a sequence of minimum length, explicitly: 
    \[g^ne=c_1t_1,\, d_1t_1=c_2t_2,\,\dots,\ d_mt_m=h^{n}eg^n.\]
    where  $m\in \N$,  $t_i\in S$ and $(c_i,d_i)\in Y_{n-1}\cup Y_{n-1}^{-1}$ for $i=1,\dots,m$. 
    
    We shall show by induction that each $(c_i,d_i)$ is equal to $(1,e)$ or $(e,1)$ and that $eg^ne=et_i$ for each $i.$ This contradicts the fact that $d_mt_m=h^{n}eg^n$ as we would then have
    $$eg^neh^n\ \R\ eg^ne = et_m = ed_mt_m = e(h^{n}eg^n) \leq h^{n}e g^n $$
    which contradicts assumption \ref{it:a4}.

    We proceed with the inductive argument. We have $g^ne=c_1t_1,$ so $g^ne\leq_\R c_1$. By assumption \ref{it:a3}, we have $g^neh^n\not\leq g^keh^k$ or $ h^keg^k$ for any $0<k<n$, which implies that $c_1\neq g^ke,h^{k}eg^k,h^ke,g^keh^k$. Therefore $(c_1,d_1)=(1,e)$ or $(e,1).$ It then also follows that $et_1=ec_1t_1=e(g^ne)$ as required.  
    
    For the inductive step we assume that $(c_i,d_i)$ is $(1,e)$ or $(e,1)$ and $et_i=eg^ne$ for each $1\leq i< l.$ Then we have that $d_{l-1}t_{l-1}=c_lt_l$ and we know $d_{l-1}$ is equal to $1$ or $e$; we consider these cases separately. First suppose $d_{l-1}=e$ so $eg^ne=et_{l-1}=c_lt_l$, which implies that $eg^ne\leq_\R c_l.$ Therefore, by assumption \ref{it:a4}, $c_l$ is not  of the form $g^ke,\ h^{k}eg^k,\ h^ke$ or $g^keh^k$ for any $0<k<n$, so that $c_l$ is $1$ or $e.$  Now suppose that $d_{l-1}=1$, so that $t_{l-1}=c_lt_l.$  In this case, if $c_l=g^ke$ then using assumption \ref{it:a2} we have
    $$eg^neh^n\ \R\ eg^ne=et_{l-1}=ec_lt_l=e(g^ke)t_l = (g^{k}h^{k})eg^ket_l=(g^ke)(h^{k}eg^kt_l)\leq_{\R} g^keh^k.$$
    By \ref{ss:reg} this implies $eg^neh^n \leq g^keh^k$, which contradicts assumption \ref{it:a4}. Very similar arguments, using the condition that $eg^neh^n\not\leq h^keg^k,g^keh^k$, demonstrate that $c_l$ is also not $h^{k}eg^k,\ h^ke$ or $g^keh^k$.  Therefore we have $(c_l,d_l)=(1,e)$ or $(e,1).$ Furthermore, as $d_{l-1}=1$ or $e$ we obtain that 
    $$et_l=ec_lt_l=ed_{l-1}t_{l-1}=et_{l-1}=eg^ne.$$
    This completes the inductive step, which completes the proof.
\end{proof}

We are now able to complete the proof of Theorem~\ref{thm:helptheorem}.

\begin{proof}[Proof of Theorem~\ref{thm:helptheorem}]
    Let $\sigma_n=\langle Y_n\rangle$, where $Y_n$ is  defined as in Proposition~\ref{thm:tauN neq tauN+}.  By Lemma \ref{lem:gen set annihilator} we know that $r(e\rho)=\bigcup_{n\in \N} \sigma_n$ and by Theorem \ref{thm:tauN neq tauN+} we know that $\sigma_1\subset \sigma_2\subset\ldots$ is an infinite strictly ascending chain of right congruences. It follows that $r(e\rho)$ is not finitely generated.   
\end{proof}

\begin{rem}
We have seen in Corollary \ref{cor:Pforbid} that the monoid $P(g,h,e)$ is a bar to left or right coherency of over-monoids.
We note that in this regard, the monoid $P(g,h,e)$ is somewhat special, in that it cannot be replaced by an arbitrary non-coherent monoid.
For example, if $F_3$ is the free monoid on 3 generators, then from \cite[Example 6.1]{dandan:2020} the monoid $F_3\times F_3$ is not right coherent, but it embeds into a group, and all groups are right coherent \cite{gould:1992}. 
\end{rem}

\begin{rem} \label{re:f3xf3}
We remark that whereas the conditions in Proposition~\ref{thm:tauN neq tauN+} are sufficient to imply that the monoid is not left or right coherent, they are not necessary. For example, the non-right coherent monoid $F_3\times F_3$ mentioned above has no regular elements, other than the identity. Consequently, there are no suitable elements $e,g,h$ to witness the conditions of Proposition~\ref{thm:tauN neq tauN+}.
An alternative example, this time with non-trivial group of units and a non-unit regular element, may be found in Example 3.1 of \cite{gould:1992}.
In the monoid of this example the two idempotents are comparable, violating the conditions of Proposition~\ref{thm:tauN neq tauN+}.
\end{rem}


\section{Transformation and partition monoids are not right or left coherent}
\label{sec:examples}

In Section \ref{sec:weak for partition} we saw that the monoids $\I_X$, $\T_X$, $\PT_X$ and $\P_X$ are all weakly left and right coherent. We proceed to show that none of these monoids are right or left coherent.

\begin{thm}
\label{thm:coh}
    For any infinite set $X$, the symmetric inverse monoid $\I_X$, the full transformation monoid $\T_X$, the partial transformation monoid $\PT_X$, and the partition monoid $\P_X$ are not right or left coherent.
\end{thm}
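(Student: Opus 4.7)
The plan is to reduce Theorem \ref{thm:coh} to Corollary \ref{cor:Pforbid}: for each of $\I_X$, $\T_X$, $\PT_X$ and $\P_X$, it suffices to exhibit a subsemigroup isomorphic to $P(g,h,e)$. Since Proposition \ref{prop:Tpres} already realises $P(g,h,e)$ as a submonoid of $\I_\Z$, the task reduces to embedding $\I_\Z$ (as a semigroup) into each of the four monoids.

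For $\I_X$, I would fix a countably infinite subset $Y\subseteq X$ together with a bijection $\theta: Y\to\Z$, and send each $\alpha\in\I_\Z$ to the element of $\I_X$ that acts as $\theta\alpha\theta^{-1}$ on the relevant subset of $Y$ and as the identity on $X\setminus Y$. This yields an injective monoid homomorphism $\I_\Z\hookrightarrow \I_X$. Composing with the natural inclusion $\I_X\hookrightarrow\PT_X$ simultaneously handles $\PT_X$.

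For $\T_X$, I would decompose $X=X_0\sqcup\{\ast\}$ with $X_0$ infinite, apply a variant of the above argument to embed $\I_\Z$ into $\PT_{X_0}$ (extending partial maps by the identity outside a fixed countable subset of $X_0$), and then apply the classical sink embedding $\PT_{X_0}\hookrightarrow\T_X$: each $\alpha\in \PT_{X_0}$ is sent to the total map $\widetilde\alpha\in\T_X$ agreeing with $\alpha$ on $\dom\alpha$, sending $X_0\setminus\dom\alpha$ to $\ast$, and fixing $\ast$. A short verification, hinging on the fact that $\ast$ is an absorbing fixed point, shows this is an injective semigroup homomorphism.

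For $\P_X$, I would use the standard embedding $\PT_X\hookrightarrow\P_X$ sending $\alpha$ to the partition whose transversal blocks are $\{x,(x\alpha)'\}$ for $x\in\dom\alpha$ and whose remaining blocks are singletons, and then compose with the embedding $\I_\Z\hookrightarrow\PT_X$ constructed above. That this is a semigroup embedding is straightforward from the graphical description of partition product in \ref{ss:part}. None of the individual steps presents a genuine obstacle; the substantive content of the proof lies in Section \ref{sec:annihilators}, and what remains here is simply the bookkeeping of standard embeddings between these four well-studied monoids.
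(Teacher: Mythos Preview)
Your approach is essentially the same as the paper's: both invoke Corollary~\ref{cor:Pforbid} and Proposition~\ref{prop:Tpres}, then pass through the standard embeddings $\I_\Z\hookrightarrow\I_X\hookrightarrow\PT_X$, the sink map into $\T_X$, and an embedding into $\P_X$. One small slip: the map you describe for $\P_X$ (transversals $\{x,(x\alpha)'\}$, singletons elsewhere) is only well-defined for \emph{injective} $\alpha$---for non-injective $\alpha$ the putative blocks overlap---so it is really the embedding $\I_X\hookrightarrow\P_X$ rather than $\PT_X\hookrightarrow\P_X$; this is exactly what the paper uses, and since you only apply it to elements coming from $\I_X$, your argument is unaffected.
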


\begin{proof}
All the assertions follow from Corollary \ref{cor:Pforbid} and Proposition \ref{prop:Tpres} via a number of embeddings.
First, $\I_\mathbb{Z}$ naturally embeds into $\I_X$, since $X$, being infinite, contains a countable subset.
Next, $\I_X$ is a submonoid of $\PT_X$ by definition. It is also isomorphic to a submonoid of $\P_X$, if we view partial bijections as partitions with trivial kernels and cokernels. Finally, $\PT_X$ embeds into $\T_Y$, where $Y:=X\cup \{-\}$,  via the standard trick of interpreting a partial mapping $\al$ on $X$ as a full mapping on $Y$, which sends to $-$ all $x\in X$ such that $x\al$ is undefined, as well as $-$ itself. But, $X$ is infinite, so $|Y|=|X|$, and thus $\PT_X$ embeds into $\T_X$.
\end{proof}

  We conclude by remarking that the method of demonstrating that a given monoid is not left or right
coherent by virtue of the existence of a subsemigroup with particular properties is a novel one. The subsemigroup $P(g,h,e)$ we devise here for this purpose is inverse
with a non-trivial group of units and infinitely many idempotents. 
As already mentioned in Remark~\ref{re:f3xf3}, the method does not apply \emph{with this} $P(g,h,e)$,
 to some monoids that are known by ad hoc
methods not to be left or right coherent. Furthermore, there are whole classes  of monoids, whose coherency properties
are unknown,  to which the method, as is,  cannot be applied, since all the monoids in questions have only trivial subgroups or only one idempotent.   We therefore pose the question of whether it would be possible to
 find other configurations of elements within monoids that would prevent them from
 being left or right coherent, and such that our method could then be applied  to these classes. 
 A particular case of interest would be that of cancellative monoids, including cancellative monoids of transformations.



\end{document}